\definecolor{Red}{rgb}{1,0,0}
\newcounter{mycounter}
\newtheorem{theorem}{Theorem}
\theoremstyle{definition}
\newtheorem{definition}[theorem]{Definition}
\newtheorem{example}[theorem]{Example}
\theoremstyle{remark}
\newtheorem{remark}[theorem]{Remark}
\numberwithin{equation}{section}
\begin{document}

\title[Word calculus in the fundamental group of the Menger curve]{Word Calculus in the fundamental group \\ of the Menger curve}

\author{Hanspeter Fischer}

\address{Department of Mathematical Sciences\\Ball State University\\Muncie, IN 47306\\U.S.A.}

\email{fischer@math.bsu.edu}

\author{Andreas Zastrow}

\address{Institute of Mathematics, University of Gda\'nsk, ul. Wita Stwosza 57,
80-952 Gda\'nsk, Poland}

\email{zastrow@mat.ug.edu.pl}

\thanks{}

\subjclass[2000]{20F65; 20E08, 55Q52, 57M05, 55Q07}

\keywords{Generalized Cayley graph, Menger universal curve, word sequence}

\date{\today}

\commby{}

\begin{abstract}
The fundamental group of the Menger universal curve is uncountable and not free, although all of its finitely generated subgroups are free. It contains an isomorphic copy of the fundamental group of every one-dimensional separable metric space and an isomorphic copy of the fundamental group of every planar Peano continuum.
 We give an explicit and systematic combinatorial description of the fundamental group of the Menger universal curve and its generalized Cayley graph in terms of word sequences. The word calculus, which requires only two letters and their inverses, is based on Pasynkov's partial topological product representation and can be expressed in terms of a variation on the classical puzzle known as the Towers of Hanoi.
\end{abstract}

\mbox{\hspace{1pt}}\vspace{-12pt}

 \maketitle

\section{Introduction}

\noindent The fundamental group $\pi_1({\mathbb M})$ of the Menger universal curve $\mathbb{M}$ contains an isomorphic copy of the fundamental group of every one-dimensional separable metric space \cite[\S5]{CF2} and an isomorphic copy of the fundamental group of every planar Peano continuum \cite[\S6]{CC}. All of these groups have recently come under renewed scrutiny (see, e.g., \cite{CC3,CL,E}), while eluding concrete description by classical means.

 The uncountable group $\pi_1(\mathbb{M})$ is not free \cite[Thm~2.2]{CF}, but all of its finitely generated subgroups are free \cite{CF2}, suggesting that any object resembling a Cayley graph should be tree-like. Indeed, in \cite{FZ} we have shown that, given any one-dimensional path-connected compact metrizable space, there is, in principle, an $\mathbb{R}$-tree that can be regarded as a generalized Cayley graph for its fundamental group.

 In this article, we present a concrete and systematic construction of a generalized Cayley graph for the fundamental group of the Menger universal curve by way of sequences of finite words in two letters and their inverses.
 Our word calculus is based on a representation of $\mathbb{M}$ as the limit of an inverse sequence of certain finite 4-valent graphs $X_n$ (see \S\ref{sp} and \S\ref{edgelabels} below), constituting a special case of Pasynkov's classical construction  \cite{P}. We choose these graphs so that each $X_n$ can be reinterpreted as the directed state graph of a variation on the classical Towers of Hanoi puzzle with $n+1$ disks (see \S\ref{TH}). In this way, we obtain a ``mechanical'' description of $\pi_1(\mathbb{M})$.

The material is organized in such a way that the sections on the word calculus for $\pi_1(\mathbb{M})$ and its generalized Cayley graph (see \S\ref{map}--\ref{CayM}) can be read independently of the game interpretation. In fact, most aspects are presented both in terms of word sequences and in terms of disk movements. ({\sl References to the game are set in slanted typeface}.) However, we hope that the reader who chooses to engage in the puzzle correspondence will gain better visual insight into the overall combinatorics, which are ultimately driven by the injection $\pi_1(\mathbb{M}) \hookrightarrow \check{\pi}_1(\mathbb{M})$ into the first \v{C}ech homotopy group (see \S\ref{cech}).
We close with a brief discussion of a concrete embedding of the much-studied Hawaiian Earring into our model (see~\S\ref{HEemb}).

\subsection{About the word calculus} As we shall see, each graph $X_n$ naturally covers a bouquet of two directed circles $x$ and $y$, allowing us to describe the set $\Omega_n$ of based edge-loops in $X_n$ in terms of finite words over the alphabet $\{x^{+1}, x^{-1}, y^{+1},y^{-1}\}$. The topological bonding maps $f_n:X_{n+1}\rightarrow X_n$ then induce combinatorial bonding functions $\phi_n:\Omega_{n+1}\rightarrow \Omega_n$.  The reduced edge-loops (reduced as words in the free group on $\{x,y\}$) are canonical representatives for the elements of $\pi_1(X_n)$ and form a reduced inverse sequence $(\Omega'_n,\phi_n')_{n\in\mathbb{N}}$ whose limit is isomorphic to $\check{\pi}_1(\mathbb{M})$. The combinatorial description of $\pi_1(\mathbb{M})$ then follows along the lines of \cite{FZ}: From the set of {\em reduced}-coherent word sequences in $(\Omega'_n,\phi_n')_{n\in\mathbb{N}}$, we select only those which have eventually stable {\em unreduced} projections at every fixed lower level. The stabilizations of this selection of sequences form a group under the natural binary operation of termwise concatenation, followed by reduction and restabilization, and this group is isomorphic to $\pi_1(\mathbb{M})$. The fact that all functions $\phi_n$ can be calculated by one and the same explicit formula (see Definition~\ref{phi} and Example~\ref{ExProj}) allows us to present a systematic algorithm which, when randomized, outputs a generic element of $\pi_1(\mathbb{M})$ (see \S\ref{algorithm}).

The generalized Cayley graph of $\pi_1(\mathbb{M})$ comprises an analogous (stabilized) selection of sequences of words describing edge-paths in $X_n$ that start at the base vertex, but do not necessarily end there, and it is turned into an $\mathbb{R}$-tree using a recursive word length calculation (see \S\ref{CayM}).  The group, which is  a distinguished subset of this $\mathbb{R}$-tree, acts freely on the $\mathbb{R}$-tree by homeomorphisms in the same natural way, with quotient homeomorphic to $\mathbb{M}$. Furthermore, arcs in the $\mathbb{R}$-tree which connect points corresponding to group elements naturally spell out word sequences that represent the group-theoretic difference of their endpoints. Considering the underlying limitations discussed in \cite[\S1]{FZ}, these features represent a best possible generalization of the concept of a classical Cayley graph for $\pi_1(\mathbb{M})$.

\vfill

\subsection{About the board game} {\sl In our version of the Towers of Hanoi, the placement of the disks is restricted to within the well-known unique shortest solution of the classical puzzle, while we allow for backtracking within this solution and for the turning over of any disk that is in transition. We color the disks white on one side and black on the other. Then the state graph of this new ``puzzle'' is isomorphic to $X_n$, with edges corresponding to situations where all disks are on the board and vertices marking the moments when disks are in transition. The exponents of the edge labels ($x^{\pm 1}$ or $y^{\pm 1}$) indicate progress (``$+1$'') or regress (``$-1$'') in solving the classical puzzle (we add a game reset move when the classical puzzle is solved) and their base letters indicate whether the two disks to be lifted at the respective vertices of this edge are of matching (``$x$'') or mismatching (``$y$'') color. Hence, each edge-path through $X_n$ corresponds to a specific evolution of this game, as recorded by an observer of the movements of the $n+1$ disks.

 Our word calculus can be modeled by aligning an entire sequence of such puzzles with incrementally more disks into an inverse system, whose bonding functions between individual games simply consist of ignoring the smallest disk. Subsequently, every combinatorial notion featured in the description of the generalized Cayley graph has a mechanical interpretation in terms of this sequence of puzzles (see \S\ref{CayM}).

In particular, the elements of $\pi_1(\mathbb{M})$ are given by those coherent sequences of plays which return all disks to the base state, while all along making sure that any seemingly cancelling interaction among finitely many disks turns out not to cancel when the movements of sufficiently many additional disks are revealed (see \S\ref{visualization}).}

\vfill

\section{A parametrization of Pasynkov's representation of $\mathbb{M}$}
\label{sp}

\noindent The Menger universal curve  is usually constructed as the intersection $\bigcap_{n=0}^\infty M_n$ \linebreak of a nested sequence of cubical complexes $M_0\supseteq M_1\supseteq M_2\supseteq \cdots$ in $\mathbb{R}^3$, where $M_0=[0,1]^3$  and $M_{n+1}$ equals the union of all subcubes of $M_n$ of the form \linebreak $\prod_{i=1}^3 [c_i/3^n,(c_i+1)/3^n]$  with $c_i\in \mathbb{Z}$ and $c_i \equiv 1$ (mod 3) for at most one $i$. (The universal curve derives its name from the fact that it contains a homeomorphic copy of every one-dimensional separable metric space \cite[Theorem~6.1]{MOT}.)

Following Pasynkov \cite{P}, we instead represent the Menger universal curve as an inverse limit of partial topological products over the circle $S^1$ with diadic fibers.
Specifically, we first parametrize the circle $S^1=\mathbb{R}/\mathbb{Z}$ by way of the additive group $[0,1)$ modulo 1, using binary expansion. We  then let $\{A,B\}$ be a discrete two-point space and define the space $X_0$ as the quotient of $S^1\times\{A,B\}$ obtained from identifying $(0,A)\sim (0,B)$ and $(1/2,A)\sim (1/2,B)$, that is, from identifying $(.0,A)\sim (.0,B)$ and $(.1,A)\sim (.1,B)$. For $t\in \{.0,.1\}$, we denote the point $\{(t,A),(t,B)\}$ of $X_0$ by $(t,\Box)$.  Recursively, for every positive integer $n$, we define the space $X_n$ as the quotient of $X_{n-1}\times\{A,B\}$ obtained from identifying $((t,w),A)\sim ((t,w),B)$ whenever $t=b/2^{n+1}$ for some odd integer $b$, that is, for every $t=.t_1t_2\cdots t_{n}1$ with $t_i\in \{0,1\}$; again, the point $\{((t,w),A),((t,w),B)\}$ of $X_n$ will be denoted by $((t,w),\Box)$. For convenience, we will write, for example, $(.0110,AB\Box B)$ instead of $((((.0110,A),B),\Box),B)$.
\begin{figure}[h]
\includegraphics{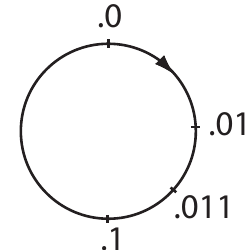} \includegraphics[scale=.5]{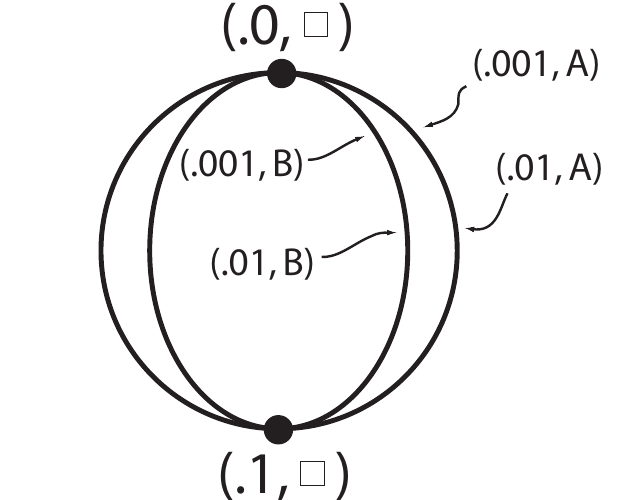} \includegraphics{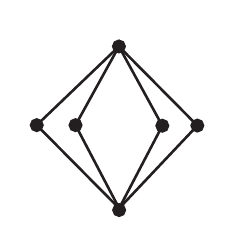}
\caption{\label{X0} $S^1$ (left),  $X_0$ (middle) and its subdivision $X_0^\ast$ (right).}
\end{figure}

\begin{figure}[h!]
\includegraphics{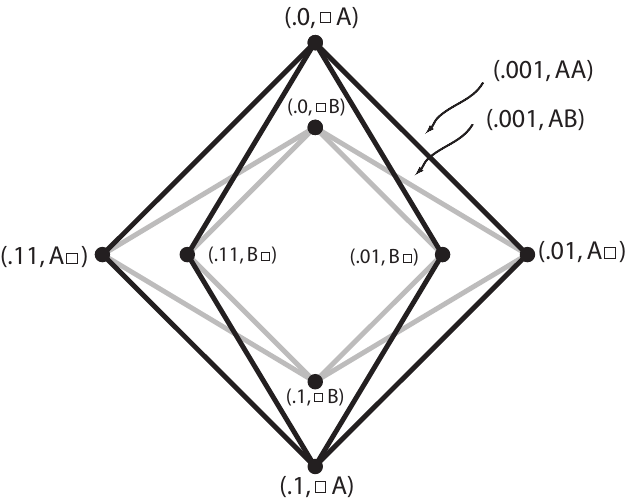}\hspace{-.1in}  \includegraphics{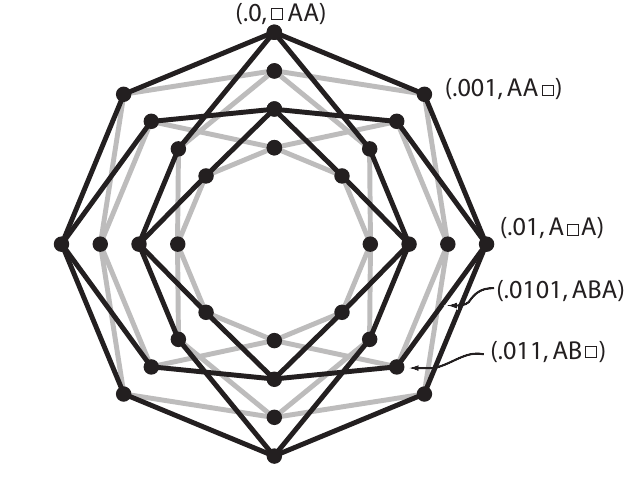}
\caption{\label{X1} The graphs $X_1$ (left) and $X_2$ (right). In this rendering, the vertices are arranged clockwise in the first coordinate and alphabetically in the second coordinate (from outside to inside).}
\end{figure}

We record the following observations for later reference (see Figures~\ref{X0} and \ref{X1}):
\begin{list}{(\ref{sp}.\arabic{mycounter})}{\usecounter{mycounter}}
\item {\bf Valence.}  Each $X_n$ is a 4-valent graph (and a simplicial complex for $n\geqslant 1$) with $2^{2n+1}$ vertices and $2^{2n+2}$ edges.

 \item  {\bf Basic building block.} The space $X_0$ can be regarded as the basic building block for the construction. It is a graph with two vertices, labeled $(.0,\Box)$ and $(.1,\Box)$, which are connected by four edges.

 \item {\bf Doubling.}\label{doubling} The graph $X_{n+1}$ can be obtained from $X_n$ by first forming the barycentric subdivision $X^\ast_n$ of $X_n$ and then, for every vertex $v$ of $X_n$, replacing the vertex star Star$(v,X^\ast_n)$ of $v$ in $X_n^\ast$ by its double along the boundary, i.e., replacing it by a copy of $X_0^\ast$, where the four edge-midpoints of $X_0$ assume the role of the four boundary points of Star$(v,X^\ast_n)$.

  \item {\bf Vertices.} \label{vertices} Every vertex of $X_n$ is of the form $(t,w)$ where $t=.t_1t_2\cdots t_{n+1}$ with $t_i\in\{0,1\}$ and $w$ is a word of length $n+1$ in the letters $A$ and $B$, except that $w$ contains the symbol $\Box$ in position $d(t)$ instead of a letter, where $d(t)$ is  the {\em bit length} of $t$:\begin{itemize}
          \item[(i)] if $t=0$, then $d(t)=1$; \item[(ii)] if $t\not=0$, then  $d(t)$ is the position of the last $1$ in the (terminating) binary expansion of $t$ (i.e. $t=b/2^{d(t)}$ for some odd integer $b$).\end{itemize}

  \item {\bf Edges.} \label{edges} Two vertices $(t,w)$ and $(s,v)$ of $X_n$ ($n\geqslant 1$) span an edge of $X_n$ if and only if $|t-s|=1/2^{n+1}$ and $w\cap v\not=\emptyset$, in which case $w\cap v$ is a word in the letters $A$ and $B$ of length $n+1$. (Here, we abuse the intersection notation by interpreting the word $AB\Box B$ as the set $\{ABAB,ABBB\}$: $AB\Box B\cap A\Box AB=ABAB$ and $AB\Box B\cap A\Box AA=\emptyset$.) The points on an edge between the vertices $(t,w)$ and $(s,v)$ are of the form $(r,w\cap v)$ with $r$ (on the short arc of $S^1$) between $t$ and $s$. In particular, the barycentric subdivision point of an edge between the vertices $(t,w)$ and $(s,v)$ is given by $((t+s)/2,w\cap v)$.

\item {\bf Bonding maps.} \label{bonding} There are natural maps $f_n:X_{n+1}\rightarrow X^\ast_n$, given by the formula $f_n(t,w)=(t,\sigma(w))$ where $\sigma(w)$ is the right-shift of the word  $w$ which ``forgets'' the last symbol.

\item {\bf Linearity.} \label{linearity} Each $f_n$ maps every edge of $X_{n+1}$ linearly onto an edge of $X^\ast_n$.

\item {\bf Preimages.} \label{preimages} The preimage $f_n^{-1}(e)$ in $X_{n+1}$ of an edge $e$ of $X_n$ is a figure-X whose four boundary vertices map to the endpoints of $e$ in pairs under $f_n$ and whose interior vertex maps to the midpoint of $e$.

\item {\bf Lifts.} \label{lifts} By composing the map $(t,w)\mapsto((t,w),A)$, which appends the letter $A$ to the word $w$, with the quotient map $X_n\times \{A,B\}\rightarrow X_{n+1}$, we obtain a  lift $g_n:X_n^\ast\hookrightarrow X_{n+1}$ with $f_n\circ g_n=id_{X_n}$. (See Figure~\ref{X1}, black edges.)

\end{list}

\begin{theorem}[Pasynkov \cite{P}] \label{Pasynkov}The limit $\displaystyle \mathbb{M}=\lim_{\longleftarrow} \left(X_1\stackrel{f_1}{\longleftarrow} X_2 \stackrel{f_2}{\longleftarrow} X_3 \stackrel{f_3}{\longleftarrow} \cdots\right)$ is homeomorphic to the Menger universal curve.
\end{theorem}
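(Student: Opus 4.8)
The plan is to prove this by appealing to R.~D.~Anderson's topological characterization of the Menger universal curve: a metrizable continuum is homeomorphic to $\mathbb{M}$ if and only if it is one-dimensional, locally connected, has no local cut points, and contains no nonempty open subset that embeds in the plane. (A second, shorter route is to observe directly that the construction in \S\ref{sp} is precisely the special case of Pasynkov's partial topological product over $S^1$ with two-point fibers and to invoke \cite{P}; I will instead indicate how to verify Anderson's four conditions, since these isolate exactly where the content lies.)

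First I would dispose of the routine points. Since each $X_n$ is a finite graph, the inverse limit $\mathbb{M}$ is compact and metrizable; as the bonding maps $f_n$ are surjective and each $X_n$ is connected, $\mathbb{M}$ is a continuum. As an inverse limit of one-dimensional compacta it has covering dimension at most $1$, and being nondegenerate it has dimension at least $1$, so $\dim\mathbb{M}=1$. For local connectivity I would use the structural observations of \S\ref{sp}: because $f_n$ carries each edge linearly onto an edge and the preimage $f_n^{-1}(e)$ of every edge is a connected figure-X, the preimage in $\mathbb{M}$ of any connected subcomplex of $X_n$ is again connected. Since the diameters of the projections of the cells of $X_n$ shrink to zero, this furnishes a neighborhood basis of connected open sets, so $\mathbb{M}$ is a Peano continuum.

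The substantive content is the verification of the two characteristic Menger properties, and here I would exploit the self-similar doubling description of \S\ref{sp}, whereby $X_{n+1}$ is obtained from $X_n$ by barycentrically subdividing and then doubling every vertex star into a copy of $X_0^\ast$. Every point of $\mathbb{M}$ has arbitrarily small neighborhoods whose images in $X_n$, for large $n$, are entire doubled vertex stars of the $4$-valent complex. Since deleting any single point of such a doubled star (two vertices joined by four subdivided edges) leaves it connected, no point can locally separate $\mathbb{M}$; promoting this through the connected figure-X preimages gives the no-local-cut-point condition.

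The main obstacle will be nowhere-planarity, which is a genuinely global feature of the limit and is the delicate step: the individual graphs $X_n$ may themselves be planar (as their renderings in Figure~\ref{X1} suggest), so non-planarity of $\mathbb{M}$ cannot be read off from any single stage and must be extracted from the inverse-limit structure. The strategy I would adopt is to argue that the doubling is self-similar, so that every nonempty open subset of $\mathbb{M}$ contains a homeomorphic copy of the entire limit, thereby reducing the problem to showing that $\mathbb{M}$ itself is non-planar; the careful part is then to show that the repeated doubling forces a nested, non-removable obstruction to any planar embedding of an open piece, rather than merely of its finite projections. With these four conditions established, Anderson's characterization identifies $\mathbb{M}$ with the Menger universal curve.
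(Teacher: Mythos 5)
Your overall route (Anderson's characterization) is the same as the paper's, and your treatment of compactness, dimension, and local connectedness via the connected figure-X preimages matches the paper's Property~S argument. But there is a genuine gap at the step you yourself identify as delicate, and it stems from a false premise: non-planarity is \emph{not} ``a genuinely global feature of the limit'' that ``cannot be read off from any single stage.'' The paper's argument is entirely finite-level and rests on an ingredient you never invoke, namely the sections $g_n:X_n^\ast\hookrightarrow X_{n+1}$ of (\ref{sp}.\ref{lifts}), which compose to give embeddings of each approximating graph $X_n$ into $\mathbb{M}$. Given a nonempty open $U\subseteq\mathbb{M}$, one finds an edge $e$ of some $X_n$ whose embedded copy lies in $U$; by (\ref{sp}.\ref{preimages}) the iterated preimage of $e$ in $X_{n+3}$ contains a subdivision of $K_{3,3}$, and this finite non-planar graph also embeds in $U$ via the sections. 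That settles nowhere-planarity with no appeal to the limit structure. Your proposed substitute --- that every nonempty open subset of $\mathbb{M}$ contains a homeomorphic copy of all of $\mathbb{M}$, followed by a ``nested, non-removable obstruction'' to planarity --- is not carried out, and the first half of it is essentially as hard as the theorem itself (it is close to Anderson's homogeneity results, which are what you would be trying to prove).

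A smaller remark on local cut points: your argument that a doubled vertex star minus a point is connected only controls what happens at one finite level, whereas a point of $\mathbb{M}$ projects to a (varying) point at every level, and ``promoting through the figure-X preimages'' does not by itself produce a detour inside $\mathbb{M}\setminus\{p\}$. The paper again uses the embeddings from (\ref{sp}.\ref{lifts}): any edge-path in $X_n$ can avoid any prescribed point of $X_n$ by a detour of at most $6$ edges, and these detours are realized inside $\mathbb{M}$ itself via the embedded copies of the $X_n$. Incorporating the sections $g_n$ is the single missing idea that repairs both steps of your sketch.
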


\begin{proof}[Sketch of Proof] Since each $X_n$ is one-dimensional, compact, connected and metrizable, so is $\mathbb{M}$ (see \cite[Corollary~8.1.7]{Pears} and \cite[Theorem~2.4]{Nadler}, for example).
 Moreover, it follows from (\ref{sp}.\ref{preimages}) that for every $n$, the preimage $p^{-1}_n(C)$ in $\mathbb{M}$ of every closed vertex star $C$ of $X_n$ under the inverse limit projection $p_n:\mathbb{M}\rightarrow X_n$ is connected. Hence, $\mathbb{M}$ has Property~S (every open cover can be refined by a finite cover of connected sets) and is therefore locally connected \cite[Theorem~IV.3.7]{W}.
 Also, it is not difficult to see that $\mathbb{M}$ does not have any local cut points, using the simple fact that every edge-path in $X_n$ can avoid any point of $X_n$ by a detour of at most 6 edges and considering the embedding of the approximating graphs $X_n$ into $\mathbb{M}$ induced by (\ref{sp}.\ref{lifts}).
 Finally, every nonempty open subset of $\mathbb{M}$ contains an embedded edge $e$ of some $X_n$ and hence
  it contains the embedded preimage of $e$ in $X_{n+3}$ under the map $f_{n+2}\circ f_{n+1}\circ f_n$, which in turn contains a subdivision of the (non-planar) complete bipartite graph $K_{3,3}$ by (\ref{sp}.\ref{preimages}). In summary, $\mathbb{M}$ satisfies Anderson's characterization  of the Menger universal curve \cite{A1,A2} (see also \cite[Theorem~4.11]{MOT}).
\end{proof}

\section{The \v{C}ech homotopy group of $\mathbb{M}$}\label{cech}

\noindent It is straightforward to describe the first \v{C}ech homotopy group $\check{\pi}_1(\mathbb{M})$ using the inverse system of the previous section. The fundamental group of $X_0$ is free on any three of the four oriented edges from $(.0,\Box)$ to $(.1,\Box)$, i.e., $\pi_1(X_0, (.0,\Box))=F_3$.
 The graph $X_{n+1}$ contains natural copies of $X^\ast_n$ by (\ref{sp}.\ref{lifts}). Indeed, by (\ref{sp}.\ref{doubling}) $X_{n+1}$ is easily seen to be homotopy equivalent to a graph obtained from $X_n$ by gluing one copy of $(X_0,(.0,\Box))$ with its base point to each of the vertices of $X_n$. Hence, we have $\pi_1(X_{n+1})=\pi_1(X_n)\ast F_3\ast F_3 \ast \cdots \ast F_3$, with one free factor of $F_3$ for each vertex of $X_n$. Moreover, the homomorphism  $f_{n\#}:\pi_1(X_{n+1})\rightarrow \pi_1(X_n)$ trivializes the additional $F_3$ factors by (\ref{sp}.\ref{bonding}).
If all we wanted to do was compute the first \v{C}ech homotopy group,
 this would be all we need to know:
 \[\check{\pi}_1(\mathbb{M})=\lim_{\longleftarrow}\big(F_3\ast (F_3\ast F_3)\longleftarrow F_3\ast F_3\ast F_3\ast(F_3\ast F_3\ast \cdots \ast F_3)\longleftarrow \cdots\big)\]
 However, since we are interested in describing  $\pi_1(\mathbb{M})$, which consists of homotopy classes of continuous loops,
  and since the free factors are attached along a countable dense subset, we need to use the approach taken in \cite{FZ} of combinatorially examining the natural injective homomorphism $\pi_1(\mathbb{M})\hookrightarrow \check{\pi}_1(\mathbb{M})$ from \cite{CF2}.

   The machinery of \cite{FZ} applies to the set-up here, because of Theorem~\ref{Pasynkov} and (\ref{sp}.\ref{linearity}) (cf. \cite[Lemma~3.1]{FZ}). As we shall see, it turns out to produce a very systematic and concrete description of $\pi_1(\mathbb{M})$ and its generalized Cayley graph, which can be mechanically illustrated using a variation on the classical Towers of Hanoi puzzle.

\section{Edge labels}\label{edgelabels}

\noindent The endpoints of a given edge $e$ of $X_n$ are of the form $(t,w)$ and $(t+1/2^{n+1},v)$, where  $w\cap v$ is a word in the letters $A$ and $B$ of length $n+1$, by (\ref{sp}.\ref{edges}). By (\ref{sp}.\ref{vertices}), the words $w$ and $v$ have the symbol ``$\Box$'' in position $d(t)$ and $d(t+1/2^{n+1})$, respectively. We label the edge $e$ according to the following rule:
\[label(e)=\left\{ \begin{array}{ccc}
                     x & \mbox{ if } & letter(d(t),w\cap v)=letter(d(t+1/2^{n+1}),w\cap v)\\
                     y &  \mbox{ if } & letter(d(t),w\cap v)\neq letter(d(t+1/2^{n+1}),w\cap v)
                   \end{array}
\right.\]
where $letter(d,w\cap v)=d^{th}$ letter of the word $w\cap v$.

\subsection{Directed edges}
If the edge $e$ above is labeled ``$x$'', respectively ``$y$'', we label the directed edge from $(t,w)$ to $(t+1/2^{n+1},v)$ by ``$x^{+1}$'', respectively ``$y^{+1}$''; and we label the directed edge
from $(t+1/2^{n+1},v)$ to $(t,w)$  by ``$x^{-1}$'', respectively ``$y^{-1}$''.

\subsection{Vertex neighbors}\label{vertexnb}
 The four vertex neighbors $(s,v)$ of a vertex $(t,w)$ in $X_n$ are readily found, given a letter  $a\in \{x^{+1},x^{-1},y^{+1},y^{-1}\}$: compute $s=t\pm 1/2^{n+1}$ according to the exponent of $a$ and form $v$ from $w$ by first swapping the symbol ``$\Box$'' with the letter in position $d(s)$, where $d(s)$ is the bit length of $s$, and then changing that letter to its opposite ($A\leftrightarrow B$) in case  $a=y^{\pm1}$. (See Figure~\ref{Star}.)
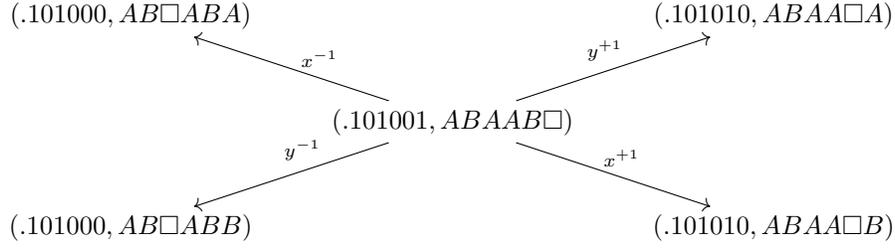
\begin{figure}[h]
\parbox{4.9in}{
\[\xymatrix{ (.101000,AB\Box ABA) & & (.101010,ABAA\Box A)\\
&  \ar[ul]_<(.3){x^{-1}} \ar[dl]_<(.3){y^{-1}} \ar[ur]^{y^{+1}} \ar[dr]^{x^{+1}} (.101001,ABAAB\Box ) & \\
(.101000,AB\Box ABB) & & (.101010, ABAA\Box B)}
\]}
\caption{\label{Star} The vertex star of $(.101001,ABAAB\Box )$ in $X_5$.}
\end{figure}

\subsection{Edge-paths}
Labeled as  above, $X_n$ is a covering space of a bouquet of two directed circles $x$ and $y$. We fix $x_n=(.0,\Box AA\cdots A)$ as the base vertex of $X_n$.
We can then record edge-paths starting at $x_n$ as finite words over the alphabet $\{x^{+1},x^{-1},y^{+1},y^{-1}\}$.
Since the covering is not regular if $n\geqslant 2$, the choice of basepoint matters when calculating whether an edge-path is a loop. (See  Section~\ref{loop}.) For example, the word $x^{+1}y^{-1}y^{-1}x^{+1}$ forms an edge-loop in $X_2$ based at $x_2=(.0,\Box AA)$, but the same word represents  an edge-path that ends at vertex $(.001,BB\Box)$ when we start at vertex $(.001,AA\Box)$. (See Figure~\ref{game}.)

\subsection{Reduced paths and cancellation}\label{cancel}
We  denote by $\mathcal W$ the set of all finite words over the alphabet  $\{x^{+1},x^{-1},y^{+1},y^{-1}\}$, including the empty word, and define
\[\Omega_n=\{\omega\in {\mathcal W}\mid \omega \mbox{ forms an edge-loop in } X_n \mbox{ based at } x_n\}.\]
For every word $\omega\in {\mathcal W}$, there is a unique reduced word $\omega'$, which results from cancelling any subwords of the form $x^{+1}x^{-1}$, $x^{-1}x^{+1}$, $y^{+1}y^{-1}$, and $y^{-1}y^{+1}$ (in any order) until this is no longer possible.
   For $ S\subseteq {\mathcal W}$, we define $S'=\{\omega'\mid \omega\in S\}$. Since $\omega'$ is a canonical representative for the
edge-path homotopy class of $\omega$, we have
$\pi_1(X_n,x_n)\cong \Omega_n'\leqslant {\mathcal W}'= F_2$, where $F_2$ denotes the free group on $\{x,y\}$.

\begin{remark}
The reader be advised that in \cite{FZ} all edge-paths are expressed in terms of visited {\em vertices}, rather than traversed {\em edges}.
\end{remark}

\section{A variation on the ``Towers of Hanoi'' puzzle}\label{TH}
{\sl

\noindent The classical puzzle known as the Towers of Hanoi consists of $n+1$ holed disks $\{1,2,\dots, n+1\}$, staggered in diameter, and a board with three pegs. The sole player stacks all disks, largest (Disk 1) to smallest (Disk $n+1$), on the leftmost peg (Peg 0) and tries to recreate this tower on the rightmost peg (Peg 2) by moving one disk at a time from peg to peg, using the middle peg (Peg 1) for temporary storage. What makes the game a puzzle is the rule that one is not allowed to ever place a larger disk on top of a smaller one.

\subsection{Shortest solution}\label{shortest} There is a well-known unique shortest solution to this puzzle through $2^{n+1}$ assemblies of the board with $2^{n+1}-1$ disk transitions \cite{Wa,CE}. Enumerating the assembly stages by binary fractions, we summarize it as follows:
 \begin{itemize}
 \item[I.] At assembly stage $t=.t_1t_2\cdots t_{n+1}$, with $t_i\in \{0,1\}$, Disk $i$ is on Peg $P_i(t)$, where $P_1(t)\equiv -t_1$ (mod 3) and  $P_i(t)\equiv P_{i-1}(t)+(-1)^i(t_i-t_{i-1})$ (mod 3).\linebreak \vspace{-10pt}

     \item[II.] Between the assembly stages $t-1/2^{n+1}$ and  $t$, Disk $d(t)$ moves from Peg $\left(P_{d(t)}(t)-(-1)^{d(t)}\right)$ (mod 3) to Peg $P_{d(t)}(t)$ (mod 3); with $d(t)$ as in (\ref{sp}.\ref{vertices}).
     \end{itemize}

\begin{remark}[{\bf Local solution rule, disk running directions, and leading disks}] {\sl While one popular (but not very practical) method of solution is by recursion, the above formulas dictate a simple local solution rule. Each disk has a fixed (cyclic) ``running direction''  toward completion of the puzzle: the largest disk moves left and the directions of the smaller disks alternate from there.
At any moment when all disks are on the board, except during the initial and the final assembly stages, there are exactly two movable disks: the smallest disk (movable in both directions) and one other disk (movable in only one direction). The local solution rule consists of always moving the largest possible disk in its natural running direction, thus moving the smallest disk every other time. To play in reverse, one would always move the largest possible disk against its natural running direction. In this way, given any intermediate assembly stage along the shortest solution, one of the two movable disks is designated to advance the solution (the ``leading disk'') and the other one to undo the previous move. (See Figure~\ref{placementA}.) For further reading, see \cite{H}.}
\end{remark}

\noindent {\bf General assumption:}\\
{\sl We will restrict the placement of the disks on the board to within this solution}.

\begin{figure}[h]

 \includegraphics[scale=.6]{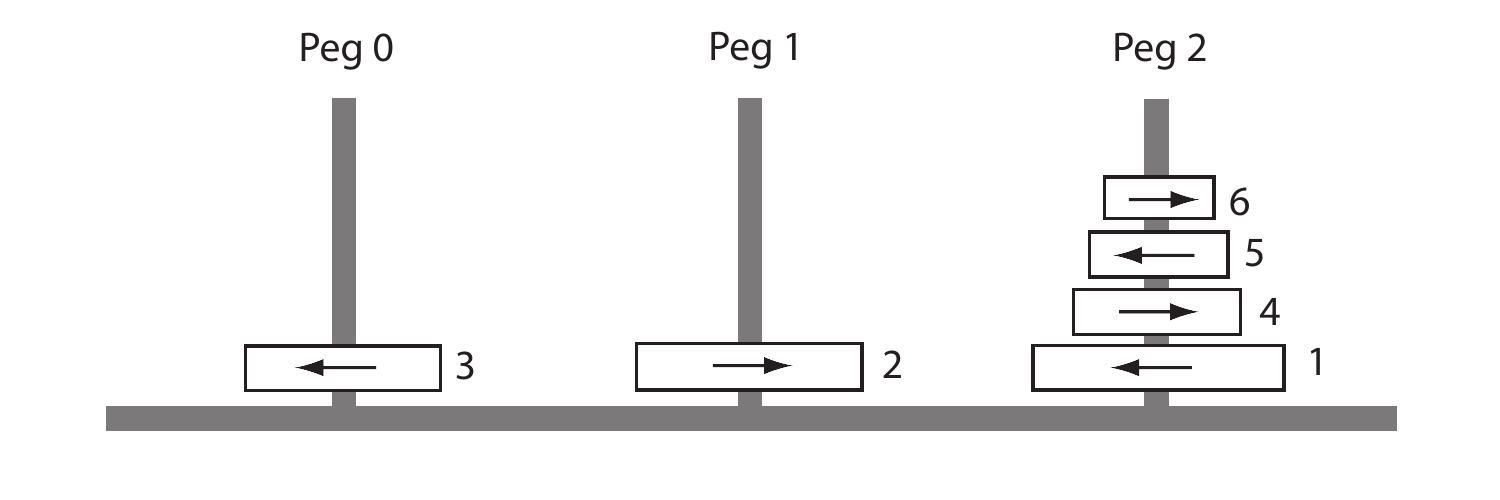}\vspace{-10pt}

\caption{\label{placementA} {\sl Assembly stage $t=.t_1t_2t_3t_4t_5t_6=.101000$ with disk positions $(P_1, P_2, P_3, P_4, P_5, P_6)=(2, 1, 0, 2, 2, 2)$. Arrows indicate the natural running direction for each disk. Moving Disk~6 (leading disk) in its natural direction will advance the solution. Moving Disk 3 against its natural direction will undo the previous move. }}
\end{figure}

\begin{remark} {\sl The allowable $2^{n+1}$ placements of the $n+1$ disks among all  possible $3^{n+1}$ placements are readily recognized by the above formula. Indeed, given the peg position $P_i$ of each Disk $i$, we can solve for $t_1\equiv -P_1$ (mod 3) and\linebreak $t_i\equiv t_{i-1}+(-1)^i(P_i-P_{i-1})$ (mod 3). Then the constellation is allowable (i.e. within the shortest solution) if and only if $t_i\in \{0,1\}$ for all $i$, in which case it corresponds to assembly stage $t=.t_1t_2\cdots t_{n+1}$.}
\end{remark}

\subsection{Our variation of the game} \label{variation} Assuming that we restrict the placement of the disks on the board to within the above solution, we make the game itself cyclic by adding a ``reset'' move, in which the final tower on Peg~2 can be lifted by the largest disk and transported back to Peg~0. The player may backtrack within the solution and may turn over any disk while holding it (but only the bottom disk during a reset move). In order to allow an observer to distinguish the two sides of each disk, we color them white~($A$) and black~($B$).

\subsection{State graphs} \label{States} Comparing the formulas of Section~\ref{shortest} with those of  Sections~\ref{sp} and \ref{edgelabels}, we see that  the graph $X_n$ can be reinterpreted as the directed state graph of the game described in Section \ref{variation}: the vertices correspond to moments when the player holds one disk in hand and the edges correspond to times when all disks are on the board. Specifically, for any point $(t,w)$ in the interior of an edge of $X_n$, say $t=.t_1t_2\cdots t_{n+1}\cdots$, the $n+1$ disks are in assembly stage $.t_1t_2\cdots t_{n+1}$. At a vertex $(t,w)$ of $X_n$, the bit length $d(t)$ of $t$, as defined in (\ref{sp}.\ref{vertices}), indicates which disk is in the player's hand. The letters of the word $w$ for any point $(t,w)$ of $X_n$ indicate the current upward-facing  color of each disk, when numbered from left (largest) to right (smallest), where the symbol ``$\,\Box$'' indicates that the corresponding disk is currently being handled. (Note that at the base vertex $x_n$, all disks are off the board, while only the largest disk is being handled.)
A given edge $e$ of $X_n$, with endpoints $(t,w)$ and $(t+1/2^{n+1},v)$, is labeled ``$x$'' if the colors of the two movable disks on the board corresponding to this edge,
 namely Disk $d(t)$ and Disk $d(t+1/2^{n+1})$, agree; otherwise $e$ is labeled ``$y$''. The edges of $X_n$ are oriented in the direction of the shortest solution to the classical puzzle. (See Figure~\ref{placementB}.)

 An edge-path in $X_n$, based at $x_n$, can be regarded as a record of the movements of the $n+1$ disks during a particular evolution of the game described in Section~\ref{variation}.\linebreak With each letter, we place a disk on the board and pick up another. (See Figure~\ref{game}.)}

 \begin{figure}[h!]

 \includegraphics[scale=.6]{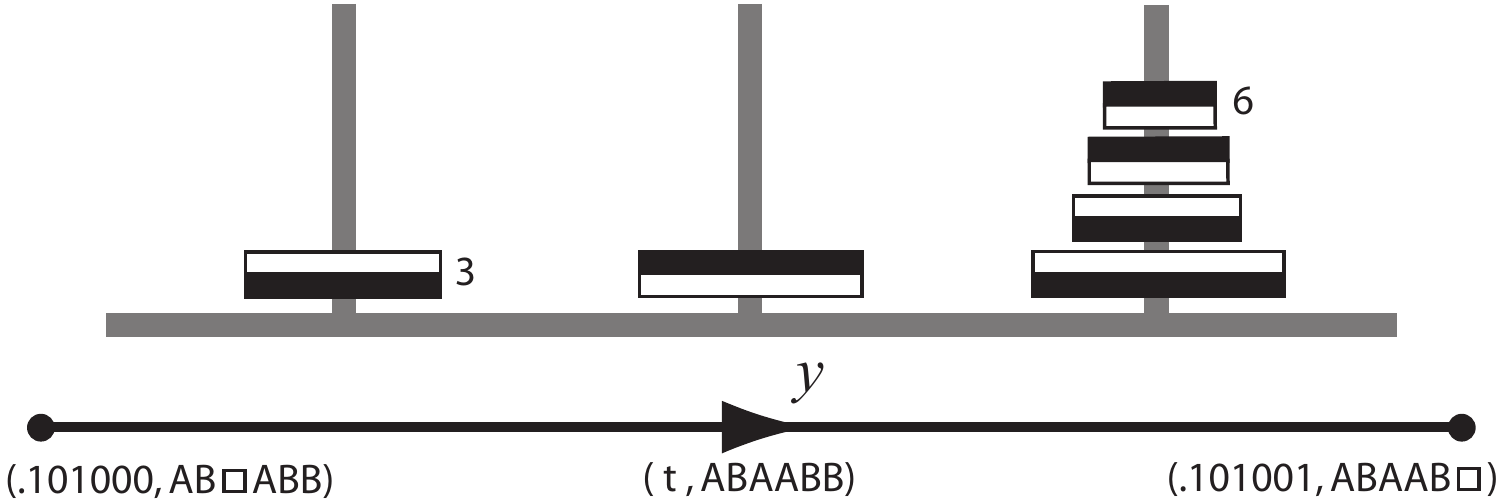}

\caption{\label{placementB} {\sl The assembly of Figure~\ref{placementA} with upward-facing colors $ABAABB$ and corresponding edge in $X_5$. (See \S\ref{States}.) At the initial vertex, Disk~3 is in the hand of the player, whereas at the terminal vertex it is Disk~6 (leading disk). The edge is labeled ``$y$'', because the colors of Disk~3 (white) and Disk~6 (black) disagree.}}
\end{figure}

\begin{figure}[h!]
\vspace{-.4in}

 \includegraphics{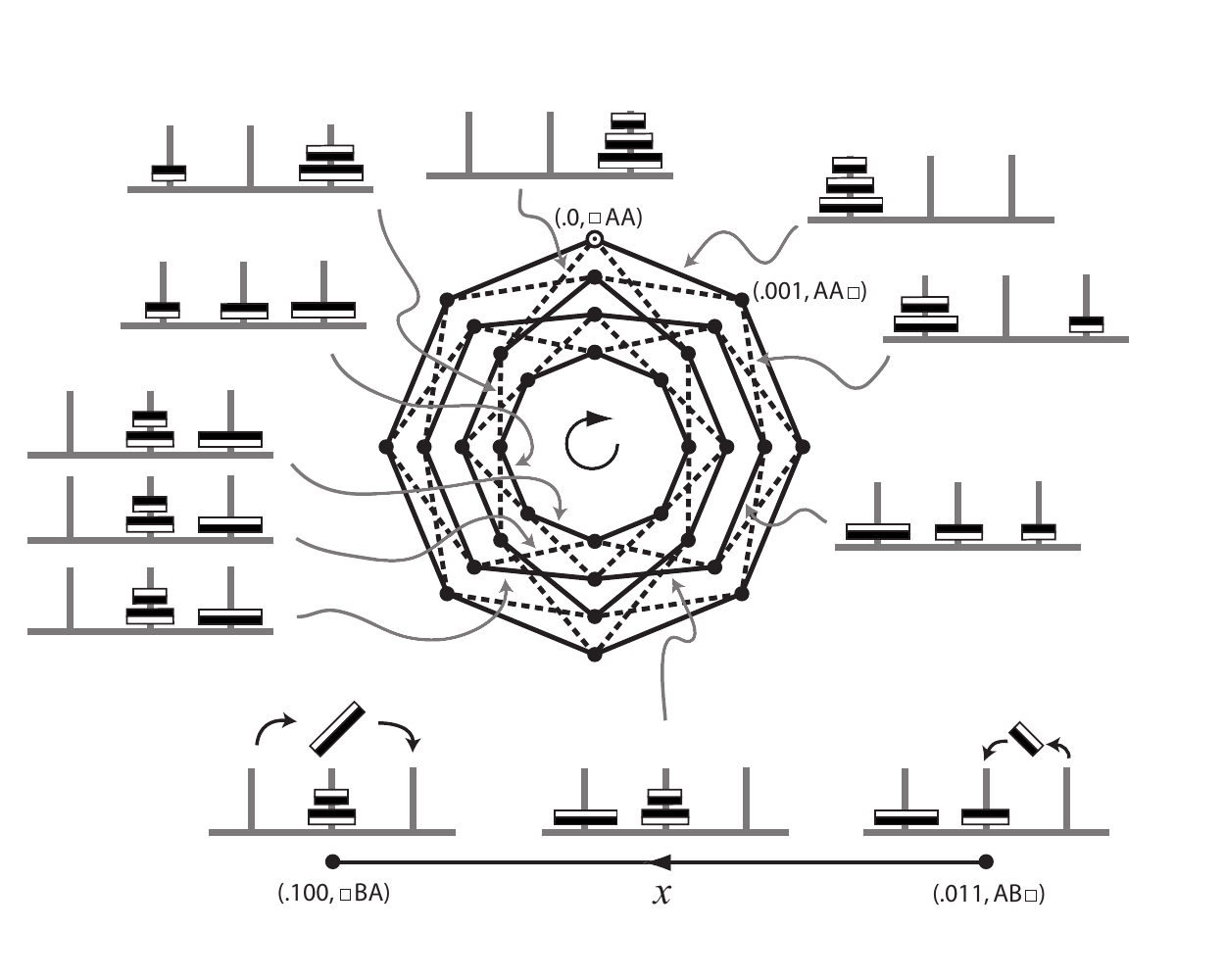}
 \vspace{-.4in}

\caption{\label{game} {\sl $X_2$ as the directed state graph of a game with 3 disks. Labels: $x=$ solid edge, $y=$ dotted edge, $A$ = white, $B$ = black.
Depicted play (edge-path): $x^{+1}y^{+1}x^{+1}x^{+1}x^{+1}y^{-1}x^{+1}x^{+1}y^{+1}y^{+1}$.
}}
\end{figure}

\pagebreak

\section{Combinatorial bonding functions $\phi_n:\Omega_{n+1}\rightarrow \Omega_n$.}\label{map}

\noindent There are natural projections $\phi_n:\Omega_{n+1}\rightarrow \Omega_n$, inducing commutative diagrams

\[\xymatrix{  \Omega_{n+1} \ar[rr]^{\text{reduce}} \ar[d]_{\phi_n} & & \Omega_{n+1}'  \ar@{<->}[r] \ar[d]_{\phi_n'}& \pi_1(X_{n+1},x_{n+1}) \ar[d]_{f_{n\#}}\\
\Omega_n \ar[rr]^{\text{reduce}} & & \Omega_n'  \ar@{<->}[r] & \pi_1(X_n,x_n)}
\]
 with $\phi_n': \Omega_{n+1}'\rightarrow \Omega_n'$ given by $\phi_n'(\omega'_{n+1}):=\phi_n(\omega'_{n+1})' =\phi_n(\omega_{n+1})'$. Using (\ref{sp}.\ref{bonding}), (\ref{sp}.\ref{linearity}) and Section~\ref{cancel}, we can describe the functions $\phi_n:\Omega_{n+1}\rightarrow \Omega_n$ in two ways, via an explicit formula or using a game description:

\subsection{An explicit formula}\label{formula}
Let $\omega_{n+1}\in \Omega_{n+1}$. Then $\omega_{n+1}=a_1a_2\cdots a_{p}$ is a finite word over the alphabet
$\{x^{+1},x^{-1},y^{+1},y^{-1}\}$, representing an edge-path which alternates between vertices of $X_{n+1}$ that project under $f_n$ to vertices of $X_n$ and vertices of $X_{n+1}$ that project to barycentric subdivision points of $X_n$. The word $\phi_n(\omega_{n+1})\in \Omega_n$ represents the edge-path of $X_n$ that traverses the vertices of $X_n$ which are visited by the projection.
In order to determine the letters of the word $\phi_n(\omega_{n+1})=b_1b_2b_3\cdots $, we need to
know when and how the edge-path $\omega_{n+1}$ transitions from one preimage $f_n^{-1}(Star(v,X^\ast_n))$ for a vertex $v$ of $X_n$ to another. (See Figures~\ref{preimage} and \ref{transitions}.)

\begin{figure}[h!]
\includegraphics{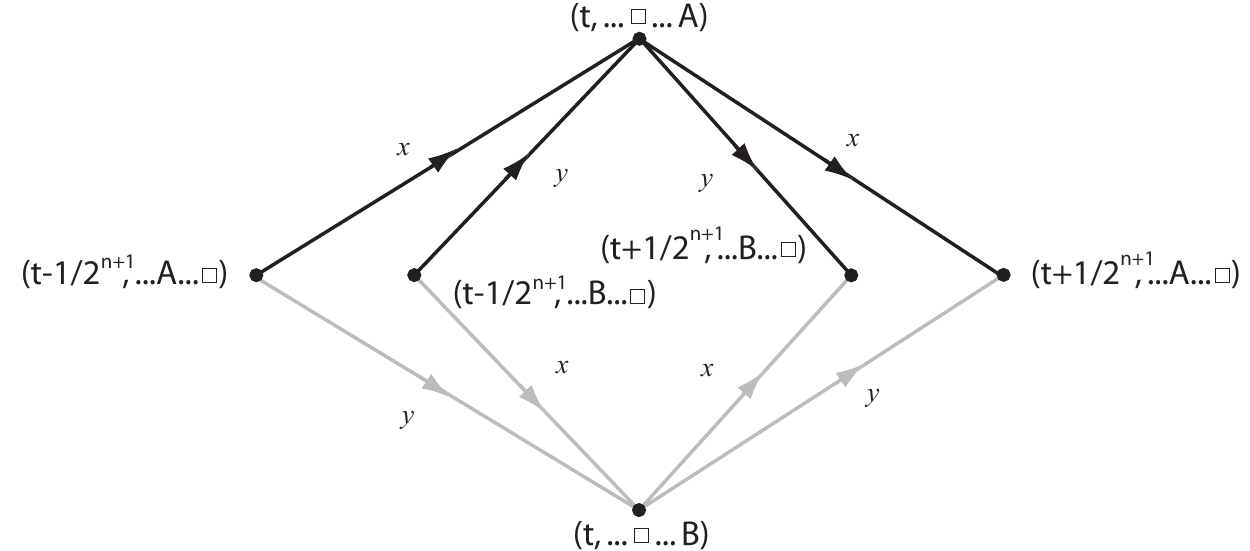}
\caption{\label{preimage} The transition rules inside the preimage of $Star(v,X^\ast_n)$ under $f_n:X_{n+1}\rightarrow X^\ast_n$ for a vertex $v$ of $X_n$.}
\end{figure}

\begin{figure}[h!]
\includegraphics{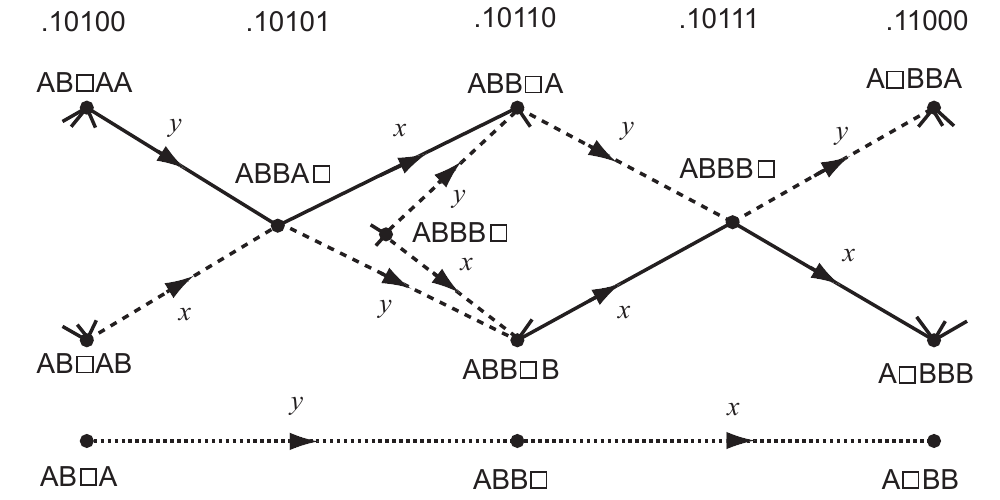}
\caption{\label{transitions} An edge-path (dotted) in $X_4$ and its projection  $\phi_3(\cdots x^{+1}\mid y^{+1} (x^{-1}y^{+1})y^{+1} \mid y^{+1} \cdots)=\cdots y^{+1}x^{+1}\cdots$ in $X_3$. The symbol ``$\mid$'' marks the positions where the edge-path  enters and leaves $f_3^{-1}(Star((.1011,ABB\Box),X_3^\ast))$.
}
\end{figure}

To this end, we pair up the letters in the word $\omega_{n+1}=(a_1a_2)(a_3a_4)(a_5a_6)\cdots$. So long as the signs in the
exponents of the letter pairs are opposite, we are still within the preimage of the same $Star(v,X^\ast_n)$, swapping the symbol ``$\Box$'' between the same two positions of the second coordinate, namely position $n+2$ (which is to be forgotten) and some fixed position $d\in\{1,2,\cdots, n+1\}$. A letter pair of matching signs signifies a transition from one vertex of $X_n$ to another, under the function $f_n$.
The corresponding edge label can be computed from $\omega_{n+1}$ using a combination of two
homomorphisms, $\chi:F_2\rightarrow \mathbb{Z}$ and $\psi: F_2\rightarrow \mathbb{Z}_2$, determined by
 $\chi(x)=\chi(y)=1$, $\psi(x)=0$ and $\psi(y)=1$.
 While $\chi$ keeps track of the exponents, the homomorphism $\psi$, when applied to the subwords of $\omega_{n+1}$ which correspond to edge-paths that stay within $f_n^{-1}(Star(v,X^\ast_n))$, keeps track of how often the letter in position $d$ changes between $A$ and $B$.
  Specifically:

\begin{definition}[$\phi_n:\Omega_{n+1}\rightarrow \Omega_n$]\label{phi} Given a word $\omega_{n+1}=a_1a_2\cdots a_{p}\in \Omega_{n+1}$, cut $\omega_{n+1}$ into subwords $\omega_{n+1}^1, \omega_{n+1}^2, \cdots, \omega_{n+1}^k $ by cutting between the two letters\linebreak  of every pair $(a_{2j-1}a_{2j})$ for which $\chi(a_{2j-1}a_{2j})\not=0$
  and calculate the word \linebreak $\phi_n(\omega_{n+1})=b_1b_2\cdots b_{k-1}\in \Omega_n$ using the following formulas (cf. Remarks~\ref{mod1} and \ref{color},\linebreak and Example~\ref{ExProj}):
  \begin{eqnarray*}
  \varepsilon_i    &=& \mbox{exponent of the last letter of $\omega_{n+1}^i$} \\
d_i           &=& \mbox{bit length of $(\varepsilon_1+\varepsilon_2+\cdots + \varepsilon_{i-1})/2^{n+1}$} \mbox{ and } d_1=1\\
color_i(d_s)& =&  \sum_{j=1}^i\delta(d_s,d_j)\psi(\omega_{n+1}^j)\\
b_i           &=& \left\{ \begin{array}{ccc}
                    x^{\varepsilon_i} & \mbox{ if } & color_i(d_i)= color_i(d_{i+1})\\
                    y^{\varepsilon_i} & \mbox{ if } & color_i(d_i)\neq color_i(d_{i+1})
                           \end{array}
                    \right.
  \end{eqnarray*}
  Here, $\delta$ is the Kronecker delta: $\displaystyle \delta(u,v)=\left\{\begin{array}{ccc}
                    1 & \mbox{ if } & u= v\\
                    0 & \mbox{ if } & u\neq v
                  \end{array}\right.$
  \end{definition}

\begin{remark}\label{mod1}
The bit length in Definition~\ref{phi} is calculated as in (\ref{sp}.\ref{vertices}) for values in $[0,1)$, upon reducing $(\varepsilon_1+\varepsilon_2+\cdots + \varepsilon_{i-1})/2^{n+1}$ modulo $1$.
\end{remark}

\begin{remark}\label{color}
{\sl For $i\in\{1,2,\dots,k-1\}$ and $d\in\{1,2,\cdots, n+1\}$,  $color_i(d)$ indicates the color of Disk $d$ at the end of $\omega_{n+1}^1 \omega_{n+1}^2 \cdots \omega_{n+1}^i\in \Omega_{n+1}$, with $0=A$ and $1=B$.}

\end{remark}

\begin{example} \label{ExProj}
 The word  $\omega_6=x^{+1}y^{-1}y^{-1}x^{+1}x^{+1}x^{-1}y^{+1}x^{+1}y^{+1}x^{-1}y^{-1}y^{+1}x^{-1}y^{+1}$ $y^{+1} y^{+1}x^{-1}y^{+1}x^{-1} y^{-1}x^{+1}y^{-1}y^{+1}x^{+1}x^{-1}y^{-1}y^{-1} y^{-1} x^{-1} y^{+1}$ of $\Omega_6$
 maps to the word \linebreak $\phi_5(\omega_{6})=y^{+1}y^{+1}y^{-1}x^{+1}x^{-1}y^{-1}$ of $\Omega_5$. The relevant values for $\omega_6^i$, $d_i$,  $\psi(\omega_6^i)$, $color_i(d_i)$, and $\varepsilon_i$ are listed below, where the boxed values signify the calculation of $b_5$. (Note: $color_i(d_s)=0$ if $d_s\not=d_j$ for all $j\in\{1,2,\dots,i\}$; otherwise, we have $color_i(d_s)=color_m(d_m)$ with $m=\max\{j\in\{1,2,\dots,i\}\mid d_s=d_j\}$. Moreover, the value of $\psi(\omega_{n+1}^k)=\psi(\omega_6^7)$ is not relevant for Definition~\ref{phi}. {\sl As for the game, $\omega_6$ ends with the player holding Disk 1. Therefore, column  ``Color'' is left blank for $i=7$. The correct interpretation of the value $color_7(1)$ is given in Section~\ref{loop} below.})
 \begin{center}
\begin{tabular}{r|l|c|c|c|c||c}
    &             & {\sl Disk}   & {\sl Flip}                &  {\sl Color} &  {\sl Advance}  &  Edge \\
$i$ &subword   $\omega_6^i$ &  $d_i$ &  $\psi(\omega_6^i)$ &\hspace{-5pt} $color_i(d_i)$\hspace{-5pt}  &   $\varepsilon_i$ &   $b_i$\\ \hline
1 &  $(x^{+1}y^{-1})(y^{-1}x^{+1})(x^{+1}x^{-1})y^{+1}$        &1& 1& $B$ &$+1$&  $y^{+1}$\\
2 &  $x^{+1}(y^{+1}x^{-1})(y^{-1}y^{+1})(x^{-1}y^{+1})y^{+1}$\hspace{-5pt}  &6 & 1& $B$ &$+1$&  $y^{+1}$  \\
3 &  $y^{+1}(x^{-1}y^{+1})x^{-1}$                              & 5 & 0& $A$&$-1$&               $y^{-1}$ \\
4 &  $y^{-1}(x^{+1}y^{-1})y^{+1}$                              &6 & 1&\fbox{$A$} &$+1$&   $x^{+1}$ \\
5 &  $x^{+1}x^{-1}$                                            &\fbox{5} &  0& \fbox{$A$}&\fbox{$-1$}& \fbox{$x^{-1}$} \\
6 &  $y^{-1}y^{-1}$                                            &\fbox{6} &  0&$A$ &$-1$&  $y^{-1}$ \\
7 &  $y^{-1}(x^{-1} y^{+1})$                                                  & 1 & {\sl hold}& &
 \end{tabular}
\end{center}\vspace{10pt}

\end{example}

\begin{remark}
The homomorphism $\phi_n':\Omega_{n+1}'\rightarrow \Omega_n'$ does not extend to a homomorphism $h:F_2\rightarrow F_2$. (Otherwise $h(x^2)=h(y^2)$, because $\phi_n'(x^{+1}y^{-1}y^{-1}x^{+1})$ equals the empty word. This would imply that $h(x^{2}y^{-1}x^{-2}y^{-1}x^{2})=h(y^{2}y^{-1}y^{-2}y^{-1}y^{2})$ is trivial. However, $\phi_n'(x^{+1}x^{+1}y^{-1}x^{-1}x^{-1}y^{-1}x^{+1}x^{+1})=x^{+1}y^{-1}y^{-1}x^{+1}$ does not equal the empty word. See also Example~\ref{HE1} below.)
\end{remark}

 \subsection{A game description}\label{gamephi} {\sl The function $\phi_n:\Omega_{n+1}\rightarrow \Omega_n$ is more easily described in terms
 of the game discussed in Section~\ref{TH}. It simply models an observer of a game, played like $\omega_{n+1}\in \Omega_{n+1}$, who ignores the smallest disk, Disk $n+2$, recording only the movements
of the remaining $n+1$ disks in form of the word  $\phi_n(\omega_{n+1})\in \Omega_n$.}

\begin{remark}\label{backandforth}

{\sl It might be worthwhile examining exactly how this observer processes the disk information during those stretches $\omega^i_{n+1}$ of $\omega_{n+1}\in \Omega_{n+1}$ when the configurations remain within the preimage of one fixed $Star(v,X^\ast_n)$ of a vertex $v$ of $X_n$, as depicted in Figure~\ref{preimage}. Put $d=d(t)\in \{1,2,\dots, n+1\}$. Then the left-hand side of Figure~\ref{preimage} has Disk $d$ on one peg and the right-hand side on another. The smallest disk, Disk~$n+2$, consistently occupies the remaining peg (or, if $d=1$, is atop the entire disk stack), white-side-up for the upper half of Figure~\ref{preimage} and black-side-up for the lower half. The observer, who ignores Disk~$n+2$, will not (indeed cannot) document any off-and-on or back-and-forth by Disk~$d$ between its two pegs, while Disk~$n+2$ intermittently moves off and on its otherwise fixed position. Only after Disk~$d$ settles down, will the observer add a letter to the word in progress, $\phi_n(\omega_{n+1})$, unless the game has ended. This does not happen until Disk $n+2$ moves to a new peg and some other disk, say Disk $d'$ with $d\neq d'\in \{1,2,\cdots,n+1\}$, is picked up. At that moment, the added letter is determined by comparing the colors of $d$ and $d'$, and by whether $d$ or $d'$ is the leading disk among the $n+1$ largest disks. (See Figure~\ref{undecided}, with $n+2=4$ and $d=3$.)}
\end{remark}

\vspace{-10pt}

\begin{figure}[h!]
\includegraphics{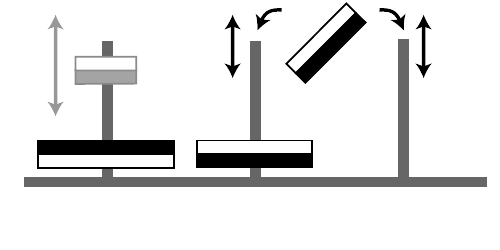}\vspace{-.2in}
\caption{\label{undecided} {\sl Disk movements corresponding to edge-paths within a subgraph of $X_3$ as depicted in Figure~\ref{preimage}, where the indicated configuration corresponds to the top vertex $(.0110,BA\Box A)$. Disk~3  moves freely among Peg~1 and Peg~2 (possibly turning), while the smallest disk, Disk~4 (depicted in gray), intermittently moves off and on Peg~0 (possibly turning).}}
\end{figure}

\subsection{Edge-loops}\label{loop}
A finite word $\omega_1$ over the alphabet $\{x^{+1},x^{-1},y^{+1},y^{-1}\}$ is an element of $\Omega_1$ if and only if
 $\chi(\omega_1)\equiv 0$ (mod 4) and $\psi(\omega_1)=0$.
A finite word $\omega_{n+1}$ over the alphabet $\{x^{+1},x^{-1},y^{+1},y^{-1}\}$ is an element of $\Omega_{n+1}$ if
 and only if $\chi(\omega_{n+1})\equiv 0$ (mod $2^{n+2}$) and
$color_k(d)=0$ for all $d\in \{1, 2, \ldots, n+1\}$, where $k$ is as in Definition~\ref{phi}.
({\sl Note that, if $\chi(\omega_{n+1})\equiv 0$ (mod $2^{n+2}$), $color_k(1)$ actually indicates the color of Disk $n+2$ at the very end of $\omega_{n+1}$. To see why, simply append the two-letter word $x^{+1}x^{+1}$ to $\omega_{n+1}$ and form the word $\xi_{n+1}=\omega_{n+1}x^{+1}x^{+1}$. Then, by Remark~\ref{color},  $color_k(1)$ indicates the color of Disk 1 at the end of $\xi^1_{n+1}\xi^2_{n+1}\cdots \xi^k_{n+1}=\omega_{n+1}x^{+1}$, which is the same as the color of Disk $n+2$ at the end of $\omega_{n+1}$. Therefore, the stated formula actually checks whether the
$n+1$} {\em smallest} {\sl disks are returned to ``white'',
rather than the $n+1$} {\em largest} {\sl disks.})

Coincidentally, the formula for $\Omega_{n+1}$ with $n=0$ agrees with that for $\Omega_1$.

\section{The fundamental group of $\mathbb{M}$}\label{PiM}
As indicated in Section~\ref{cech}, we may now begin applying the results from \cite{FZ} to obtain an explicit and systematic description of  $\pi_1(\mathbb{M})$ in terms of the combinatorial bonding functions $\phi_n:\Omega_{n+1}\rightarrow \Omega_n$ from Section~\ref{formula}/\ref{gamephi} and thereby in terms of the Towers of Hanoi puzzle from Section~\ref{TH}.

\subsection{Locally eventually constant sequences}\label{LEC}  As in \cite{FZ}, we consider
 \begin{eqnarray*}
 \Omega &=&\lim_{\longleftarrow}\big(\Omega_1 \stackrel{\phi_1}{\longleftarrow} \Omega_2 \stackrel{\phi_2}{\longleftarrow} \Omega_3
\stackrel{\phi_3}{\longleftarrow} \cdots\big) \mbox{ and}\\
G &= &\lim_{\longleftarrow}\big(\Omega_1' \stackrel{\phi_1'}{\longleftarrow} \Omega_2' \stackrel{\phi_2'}{\longleftarrow} \Omega_3'
\stackrel{\phi_3'}{\longleftarrow} \cdots\big)\cong\check{\pi}_1(\mathbb{M}).
\end{eqnarray*}
We call a sequence $(g_n)_n\in G$
 {\em locally eventually  constant} if for every $n$, the sequence  $(\phi_n\circ \phi_{n+1}\circ\cdots\circ \phi_{k-1}(g_k))_{k>n}$ of unreduced words is eventually constant. We put
\[\mathcal G=\{(g_n)_n\in G\mid (g_n)_n \mbox{ is locally eventually constant}\}.\]
For a sequence $(g_n)_n\in {\mathcal G}$ we define the {\em stabilization} $\overleftarrow{(g_n)_n}=(\omega_n)_n\in \Omega$ by \linebreak $\omega_n=\phi_n\circ \phi_{n+1}\circ\cdots\circ \phi_{k-1}(g_k)$, for sufficiently large $k$, and
$\overleftarrow{\mathcal G}=\{\overleftarrow{(g_n)_n}\mid (g_n)_n\in {\mathcal G}\}$.

The {\em reduction} of a sequence $(\omega_n)_n\in \Omega$ is defined by $(\omega_n)'_n=(\omega'_n)_n\in G$. Each $(\omega_n)_n\in \overleftarrow{\mathcal G}$ corresponds to a unique $(g_n)_n\in \mathcal G$ by way of $(g_n)_n=(\omega_n)'_n$ and $(\omega_n)_n=\overleftarrow{(g_n)_n}$ (cf. \cite[Remark~4.7]{FZ}): \[\Omega \supseteq \overleftarrow{\mathcal G} \longleftrightarrow {\mathcal G}  \subseteq G \]

\begin{remark} See Example~\ref{HE1} for a sequence
  which is {\em not} locally eventually constant.
\end{remark}

The combinatorial structure of $\pi_1(\mathbb{M})$ is  captured by the following theorem:

\begin{theorem}[Theorem A of \cite{FZ}]\label{structure}
 The word sequences of $\overleftarrow{\mathcal G}$ form a group under the binary operation $\ast$, given by termwise concatenation, followed by reduction and restabilization: $(\omega_n)_n\ast(\xi_n)_n=\overleftarrow{(\omega_n\xi_n)'_n}$. Moreover, $\overleftarrow{{\mathcal G}}\cong \pi_1(\mathbb{M})$.
\end{theorem}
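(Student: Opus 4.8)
The plan is to obtain the statement as a direct instance of Theorem~A of \cite{FZ}, so that essentially all the work consists in verifying that the inverse system built in the previous sections meets the hypotheses of that general theorem. The framework of \cite{FZ} takes as input an inverse sequence of finite connected graphs whose bonding maps carry each edge linearly onto an edge, together with the identification of the inverse limit with the space under study. First I would record that the present data satisfy these requirements: each $X_n$ is a finite connected graph (indeed $4$-valent, by the observations of \S\ref{sp}), the maps $f_n$ are simplicial and send every edge linearly onto an edge by (\ref{sp}.\ref{bonding}) and (\ref{sp}.\ref{linearity}), and $\lim_{\longleftarrow}(X_n,f_n)=\mathbb{M}$ by Theorem~\ref{Pasynkov}. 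Since $\mathbb{M}$ is one-dimensional, the homomorphism $\pi_1(\mathbb{M})\hookrightarrow\check{\pi}_1(\mathbb{M})$ is injective \cite{CF2}, as recalled in \S\ref{cech}. This is exactly the input demanded by \cite[Lemma~3.1]{FZ}.

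Next I would confirm that the combinatorial apparatus of \S\ref{map}--\S\ref{loop} faithfully transcribes the topological data, so that Theorem~A may be quoted verbatim. Concretely, I would check that the square in \S\ref{map} commutes, i.e.\ that $\phi_n$ of Definition~\ref{phi} realizes $f_{n\#}$ at the level of \emph{unreduced} edge-loops; this uses (\ref{sp}.\ref{bonding}), (\ref{sp}.\ref{linearity}), and the cancellation analysis of \S\ref{cancel}, and it identifies the reduced system $(\Omega_n',\phi_n')$ with the \v{C}ech system, justifying the identification $G\cong\check{\pi}_1(\mathbb{M})$ used in \S\ref{LEC}. With this in hand, Theorem~A applies and delivers both assertions simultaneously: it identifies the image of $\pi_1(\mathbb{M})$ in $G$ with the locally eventually constant sequences $\mathcal G$, shows that $\ast$ (termwise concatenation, then reduction, then restabilization) is well defined on the stabilizations $\overleftarrow{\mathcal G}$ and makes it a group, and produces the isomorphism $\overleftarrow{\mathcal G}\cong\pi_1(\mathbb{M})$. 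The genuinely substantive content---that a coherent reduced sequence comes from an actual loop exactly when its unreduced projections stabilize at every fixed level, and that stabilization inverts reduction on this subset---is carried entirely by \cite{FZ} and requires no new argument here beyond the hypothesis check.

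The hard part will be the bookkeeping match underlying the second step. The formalism of \cite{FZ} expresses edge-paths in terms of visited \emph{vertices}, whereas the present word calculus expresses them in terms of traversed \emph{edges} (cf.\ the Remark following \S\ref{cancel}). I would therefore pin down the dictionary between the two conventions and verify that ``locally eventually constant'' and the stabilization operator $\overleftarrow{(\cdot)}$ transport correctly across it. Once this translation is fixed, the group structure and the isomorphism follow immediately from the cited theorem.
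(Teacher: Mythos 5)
Your proposal matches the paper's own treatment: the theorem is not proved afresh but is obtained by invoking Theorem~A of \cite{FZ}, whose applicability the paper justifies exactly as you do, via Theorem~\ref{Pasynkov} and the linearity of the bonding maps (\ref{sp}.\ref{linearity}), cf.\ \cite[Lemma~3.1]{FZ}. Your additional care about the commutativity of the square in \S\ref{map} and the vertex-versus-edge bookkeeping is consistent with (indeed slightly more explicit than) what the paper records.
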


\subsection{A systematic description of the elements of the fundamental group}\label{algorithm}
Theorem~\ref{structure} describes the elements of the group $\pi_1(\mathbb{M})$ in terms of stabilizing coherent sequences of an inverse system. In order to facilitate a better combinatorial understanding of this result, we now present an algorithm which systematically describes the elements of
$\overleftarrow{\mathcal G}\cong \pi_1(\mathbb{M})$  via a finite set of rules, along with a corresponding game interpretation in Section~\ref{visualization} below. By randomly varying the parameters of this algorithm, every element of the fundamental group can be produced. The randomized algorithm recursively creates a generic sequence $g_1, \omega_1, g_2, \omega_2, g_3, \dots$ with $(\omega_n)_n\in \Omega$ and $(g_n)_n=(\omega_n)'_n$. In order to ensure that $(\omega_n)_n=\overleftarrow{(g_n)_n}$, it also arranges that for every $n\in \mathbb{N}$ there is a $K(n)\in \mathbb{N}$ such that $\omega_n=\phi_n\circ \phi_{n+1}\circ\cdots\circ \phi_{K(n)-1}(\omega'_{K(n)})$. (See Remark~\ref{stab} below.)\vspace{5pt}

{\bf Step 1 (Initialization):} The term $g_1$ can be any element of $\Omega_1'\cong \pi_1(X_1)$.  Considering the free product decomposition
 $\pi_1(X_1)=\pi_1(X_0)\ast \pi_1(X_0)\ast \pi_1(X_0)$ from Section~\ref{cech}, we fix the following set of free generators for $\Omega_1'$:
\vspace{5pt}

\begin{center}  \parbox{2.5in}{

$A_1=\mbox{\hspace{7pt} } x^{-1}A$ \hspace{15pt}  where $A=y^{+1}y^{+1}x^{-1}$

$B_1=\mbox{\hspace{7pt} } y^{-1}B$  \hspace{15pt} where $B=x^{+1}y^{+1}x^{-1}$

$C_1=\mbox{\hspace{7pt} } y^{+1}C$ \hspace{15pt} where $C=x^{-1}y^{+1}x^{-1}$

$A_2=Xy^{+1}A$ \hspace{17pt} where $X=x^{+1}y^{+1}$

$B_2=Xx^{+1}B$

$C_2=Xx^{-1}C$

$A_3=Yx^{+1}A$ \hspace{17pt} where $Y=x^{+1}x^{+1}$

$B_3=Yy^{+1}B$

$C_3=Yy^{-1}C$ }
\parbox{2in}{\includegraphics{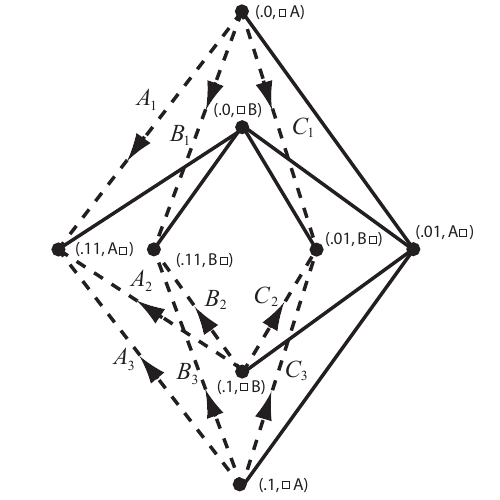}}\end{center}
\vspace{5pt}

Here, the words $A, B, C, X$ and $Y$ connect the nine generating edges through the maximal subtree (drawn with solid lines) to the base point.

In order to get a generic $g_1$, we randomly write down an element in the free group on the above nine generators $A_1, B_1, C_1, A_2, B_2, C_2, A_3, B_3, C_3$ and their inverses, and reduce the resulting word in $F_2$, the free group on $\{x,y\}$. Say, $g_1=c_1c_2\cdots c_r$ with $c_i\in \{x^{+1}, x^{-1}, y^{+1}, y^{-1}\}$. We then create an enveloping word $\omega_1\in \Omega_1$ with $\omega_1'=g_1$, whose role it is to force stabilization at level 1. To this end, we perform a finite number of reverse cancellations (unrelated to any previous cancellations) by randomly choosing an insertion sequence for $g_1$ in the form $(p_1, z_1, p_2, z_2, \ldots , p_s,z_s)$ with $1\leqslant p_i \leqslant r+1 +2(i-1)$ and $z_i\in \{x^{+1}, x^{-1}, y^{+1}, y^{-1}\}$,  meaning that at position $p_1$ of the word $g_1$ we insert $z_1z_1^{-1}$ (we prepend it if $p_1=1$; we append it if $p_1=r+1$), at position $p_2$ of the resulting word we insert $z_2z_2^{-1}$, etc. (Here, we use the usual law of exponents: e.g., $(x^{-1})^{-1}=x$.) The end result is $\omega_1$.\linebreak  Finally, we randomly choose $K(1)\in \mathbb{N}$ with $K(1)>1$ in order to mark the index by which we intend to force stabilization at level 1.\vspace{5pt}

{\bf Step 2 (Recursion):} Suppose  $\omega_n=b_1b_2\cdots b_{k-1}\in \Omega_n$ and positive integers $K(1), K(2), \dots, K(n)$ with $K(m)>m$ for all $1\leqslant m \leqslant n$ are given. In order to obtain a generic $g_{n+1}$ from $\omega_n$, we return to the decomposition $\pi_1(X_{n+1})=\pi_1(X_n)\ast F_3\ast F_3 \ast \cdots \ast F_3$ from Section~\ref{cech}, with one free factor of $F_3$ for each vertex of $X_n$. First, calculate the word $u_1v_1u_2v_2\cdots u_{k-1}v_{k-1}\in \Omega_{n+1}$ representing the edge-loop of $X_{n+1}$ corresponding to the $g_n$-lift, as defined in (\ref{sp}.\ref{lifts}), of the projection into $X_n^\ast$ of the edge-loop $\omega_n$ of $X_n$ starting at $x_n$. (To do this, note that $\chi(u_i)=\chi(v_i)=\chi(b_i)$; substitute $\omega_{n+1}^1=u_1$, $\omega_{n+1}^i=v_{i-1}u_i$, $\omega_{n+1}^k=v_{k-1}$ into the formulas of Definition~\ref{phi} and recursively calculate the values for $\varepsilon_i$, $d_i$, $color_i(d_i)$ and $\psi(\omega_{n+1}^i)$, based on $b_1b_2\cdots b_{k-1}$, noting that $\delta(d_i,d_{i+1})=0$; also note that $\psi(v_i)=color_i(d_{i+1})$.) Next, randomly choose elements $W_1, W_2,\ldots,W_k$ in the free group $F_3$ on $\{A_1, B_1, C_1\}$, regarded as a subgroup of $F_2$ with the above substitutions $A_1=(x^{-1}y^{+1})(y^{+1}x^{-1}), B_1=(y^{-1}x^{+1})(y^{+1}x^{-1})$, and $C_1=(y^{+1}x^{-1})(y^{+1}x^{-1})$. (We choose each $W_i$ reduced as a word over the alphabet $\{A_1^{\pm 1}, B_1^{\pm 1}, C_1^{\pm 1}\}$. After the indicated substitutions, it need not be reduced over the alphabet $\{x^{\pm 1},y^{\pm 1}\}$.) Now define
\[
\begin{array}{ccl}
\omega_{n+1}^1 & :=& W_1u_1\\

\omega_{n+1}^2 &:=& v_1 W_2 u_2 \\

\omega_{n+1}^3 &:=& v_2 W_3 u_3 \\

& \vdots & \\

 \omega_{n+1}^{k-1} &:=& v_{k-2} W_{k-1} u_{k-1} \\

\omega_{n+1}^k &:=& v_{k-1} W_k

\end{array}\]
 Observe that the letters of each of the concatenated words $\omega_{n+1}^1u_1^{-1}$, $v_i^{-1}\omega_{n+1}^{i+1}u_{i+1}^{-1}$, and $v_{k-1}^{-1}\omega_{n+1}^k$ come in consecutive pairs of opposite exponents.  Next, form words $\widehat{\omega}_{n+1}^i$ by subjecting each reduced word $(\omega_{n+1}^i)'$ to a random insertion sequence of the form  $(p_{i,1}, z_{i,1}, p_{i,2}, z_{i,2}, \ldots , p_{i,s_i},z_{i,s_i})$ as above, such that $s_i>0$ if $(\omega_{n+1}^i)'$ is the empty word, and such that the letters of each of the concatenated words $\widehat{\omega}_{n+1}^1u_1^{-1}$,\linebreak $v_i^{-1}\widehat{\omega}_{n+1}^{i+1}u_{i+1}^{-1}$, and $v_{k-1}^{-1}\widehat{\omega}_{n+1}^k$ still come in consecutive pairs of opposite exponents. Put \[\omega_{n+1}:=\widehat{\omega}_{n+1}^1\widehat{\omega}_{n+1}^2 \cdots \widehat{\omega}_{n+1}^k.\]
Then $\omega_n=\phi_n(\omega_{n+1})$.

If there is an $m$ with $K(m)=n+1$ and $\omega_m\neq\phi_m\circ \phi_{m+1}\circ\cdots\circ \phi_{n}(\omega'_{n+1})$, then we have failed to stabilize the sequence as planned and need to choose different words $W_1, W_2, \cdots, W_k$. In this event, we simply restrict the choice of $W_i$ by requiring it to be nonempty if $v_{i-1}=u_i^{-1}$, which will guarantee that $\omega_n=\phi_n(\omega'_{n+1})$, so that $\omega_m=\phi_m\circ \phi_{m+1}\circ\cdots\circ \phi_{n}(\omega'_{n+1})$ for all $m$ with $K(m)=n+1$.
  Finally, put $g_{n+1}=\omega'_{n+1}$ and randomly choose $K(n+1)\in \mathbb{N}$ with $K(n+1)>n+1$.

\begin{remark}\label{stab} Let $(\omega_n)_n\in \Omega$. Consider  $\omega_n=\phi_n\circ \phi_{n+1}(\omega_{n+2})$, $\nu_n=\phi_n\circ \phi_{n+1}(\omega'_{n+2})$, and $\xi_n=\phi_n(\omega'_{n+1})=\phi_n( \phi_{n+1}(\omega_{n+2})')=\phi_n\circ \phi'_{n+1}(\omega'_{n+2})$. Then the word $\nu_n$ can be obtained from $\omega_n$ by performing some letter cancellations, and $\xi_n$ in turn  can be obtained from $\nu_n$ by performing some letter cancellations. (Recall that  $\phi_n:\Omega_{n+1}\rightarrow \Omega_n$ represents the edge-path projection induced by the continuous function $f_n:X_{n+1}\rightarrow X_n$ and that cancellations generate the edge-path homotopies in graphs.) Hence, if $\omega_n=\xi_n$, then $\omega_n=\nu_n$. Therefore, if $\omega_n=\phi_n(\omega'_{n+1})$, then  $\omega_n=\phi_n\circ\phi_{n+1}(\omega'_{n+2})$. Similarly, if $\omega_n=\phi_n\circ \phi_{n+1}\circ\cdots\circ\phi_{k-1}(\omega'_k)$ then
$\omega_n=\phi_n\circ \phi_{n+1}\circ\cdots\circ\phi_{k}(\omega'_{k+1})$.
\end{remark}

\subsection{The game interpretation}\label{visualization}
  {\sl The word $\omega_n$ instructs the player on how to move $n+1$ disks. If we interfere with these movements by adding one more disk, Disk~$n+2$ (now the smallest disk), the player will have to move it out of the way every other turn (and each time there is only one peg for it to go). If, in the process, Disk $n+2$ is always placed on the board white-side-up, then the resulting movements of all $n+2$ disks combined generate the  word $u_1v_1u_2v_2\cdots u_{k-1}v_{k-1}$ of Section~\ref{algorithm}.

 Before moving Disk $n+2$ to a new peg, the player may choose to stall the game using the tactics described in Remark~\ref{backandforth}, thus generating the words $\widehat{\omega}^i_{n+1}$.

  A letter cancellation in the word $\omega_n$ means the elimination from the record of two consecutive actions that undo each other. So, if a sequence $(\omega_n)_n\in \Omega$ has the property that for every $n$ there is a $k$ with $\omega_n=\phi_n\circ \phi_{n+1}\circ\cdots\circ \phi_{k-1}(\omega'_k)$, then it is in a state of ``maximal consistent reduction'':

  While an observer of the movements of the largest $n+1$ disks might seemingly detect some cancelling disk interactions, none of these will be stricken from the record if we reveal the movements of the largest $k+1$ (or more) disks to this observer and allow him or her to eliminate all observed cancelling disk interactions among the largest $k+1$ (or more) disks before recording the movements of the largest $n+1$ disks.

  The set $\overleftarrow{\mathcal G}$ then consists precisely of those game sequences $(\omega_n)_n\in \Omega$ which, while consistently moving an increasing number of disks, are in this state of maximal consistent reduction.}

\section{The generalized Cayley graph}\label{CayM}

In \cite{FZ} we have shown that, given any one-dimensional path-connected compact metric space, there is (in principle) an $\mathbb{R}$\nobreakdash-tree which (to the extent possible) functions like a Cayley graph for the fundamental group.
 (Recall that an $\mathbb{R}$-tree is a metric space in which every two points are connected by a unique arc, and this arc is isometric to an interval of the real line.)  We now give an explicit and systematic description of this generalized Cayley graph for $\pi_1({\mathbb M})$ in terms of a two-letter word calculus, as well as in terms of our Towers of Hanoi game from Section~\ref{TH}. Before we do, we adapt the necessary terminology from \cite{FZ} to the given set-up.

\subsection{Word sequences} We now need to consider edge-paths in $X_n$, which start at $x_n$, but end at any vertex of $X_n$. For each $n\in \mathbb{N}$, these edge-paths are described by the words of ${\mathcal W}$, as defined in Section~\ref{cancel}. However, setting up
a consistent projection/reduction scheme for these words necessitates new notation, because half of the vertices of $X_{n+1}$ are not projected to vertices of $X_n$. For this purpose, we use the notation $a_1a_2\cdots a_{p-1}/a_p$ to represent any edge-path in $X_n$, which starts at $x_n$, follows the edge-path $a_1a_2\cdots a_{p-1}$, and ends somewhere in the interior of the edge spanned by the endpoints of the two edge-paths $a_1a_2\cdots a_{p-1}$ and $a_1a_2\cdots a_{p}$, without visiting any further vertices of $X_n$. Symbolically, we define the set
\[{\mathcal W}_+= {\mathcal W}\cup \{a_1a_2\cdots a_{p-1}/a_{p}\mid  a_1a_2\cdots a_{p}\in {\mathcal W}, p\geqslant 1 \}.\]
 Reduction of words in ${\mathcal W}_+$ is carried out exactly as before, where cancellations of subwords of the form $z^{+1}z^{-1}$ across the symbol ``/'' take on the following form: \[a_1a_2\cdots a_{p}z^{+1}/z^{-1}\mapsto a_1a_2\cdots a_{p}/z^{+1}.\]
 The functions $\phi_n:\Omega_{n+1}\rightarrow \Omega_n$ and $\phi'_n:\Omega'_{n+1}\rightarrow \Omega'_n$  naturally extend to functions $\phi_n:{\mathcal W}_+\rightarrow {\mathcal W}_+$ and  $\phi'_n:{\mathcal W}'_+\rightarrow {\mathcal W}'_+$, respectively, with commuting diagrams:
\[\xymatrix{  {\mathcal W}_+ \ar[rr]^{\text{reduce}} \ar[d]_{\phi_n} & & {\mathcal W}'_+ \ar[d]_{\phi'_n} \\
{\mathcal W}_+ \ar[rr]^{\text{reduce}} & & {\mathcal W}'_+}
\]

\begin{definition}
For $\omega_{n+1}\in {\mathcal W}_+$, we define $\phi_n(\omega_{n+1})$ as follows:
 \begin{itemize}
 \item[(i)] If $\omega_{n+1}=a_1a_2\cdots a_p$ with $p$ even, then $\phi_n(\omega_{n+1})$ is defined as in Section~\ref{map}.
 \item[(ii)] If $\omega_{n+1}=a_1a_2\cdots a_p$ with $p$ odd, then $\phi_n(\omega_{n+1}):=b_1b_2\cdots b_{k-2}/b_{k-1}$, where
 $\phi_n(a_1a_2\cdots a_pa_p)=b_1b_2\cdots b_{k-1}$.
 \item[(iii)] If $\omega_{n+1}=a_1a_2\cdots a_{p-1}/a_p$ with $p$ even, then $\phi_n(\omega_{n+1}):=\phi_n(a_1a_2\cdots a_{p-1})$.
 \item[(iv)] If $\omega_{n+1}=a_1a_2\cdots a_{p-1}/a_p$ with $p$ odd,  then $\phi_n(\omega_{n+1}):=\phi_n(a_1a_2\cdots a_p)$.
\end{itemize}
\end{definition}

\noindent  As before, $\phi_n'(\omega'_{n+1}):=\phi_n(\omega'_{n+1})' =\phi_n(\omega_{n+1})'$.

\begin{remark} {\sl From the perspective of our game, what prompts an observer of $n+1$ disks to put down the last letter of $\omega_n=a_1a_2\cdots a_{p}$ is the fact that the last disk was picked up. (See also Remark~\ref{backandforth}.) If the game ends, instead, with the last disk (and hence all disks) still on the board, one would write $\omega_n=a_1a_2\cdots a_{p-1}/a_p$.\linebreak So, our interpretation of $\phi_n:{\mathcal W}_+\rightarrow {\mathcal W}_+$ as modeling an observer of $n+2$ disks who only records the movements of the largest $n+1$ disks, remains the same.}
\end{remark}

Analogously to Section~\ref{LEC}, we define the following sets:
\begin{eqnarray*}
\mathcal W\mathcal S & =& \lim_{\longleftarrow}\big({\mathcal W}_+ \stackrel{\phi_1}{\longleftarrow} {\mathcal W}_+\stackrel{\phi_2}{\longleftarrow} {\mathcal W}_+
\stackrel{\phi_3}{\longleftarrow} \cdots\big) \\
 R &=& \lim_{\longleftarrow}\big({\mathcal W}'_+ \stackrel{\phi_1'}{\longleftarrow} {\mathcal W}'_+ \stackrel{\phi_2'}{\longleftarrow} {\mathcal W}'_+
\stackrel{\phi_3'}{\longleftarrow} \cdots\big)\\
{\mathcal R}&=&\{(r_n)_n\in R\mid (r_n)_n \mbox{ is locally eventually constant}\}\\
 \overleftarrow{\mathcal R}&=&\{ \overleftarrow{(r_n)_n} \mid (r_n)_n \in {\mathcal R}\}
 \end{eqnarray*}

\subsection{Equivalence} We say that a word sequence $(\omega_n)_n\in {\mathcal WS}$ is of {\em terminating type} if there is an $N\in \mathbb{N}$ such that $\omega_n\in {\mathcal W}$ for all $n\geqslant N$. There is a natural word sequence analog of the equivalence ``$0.999\dots\doteq 1.000\dots$'' in the decimal expansions of real numbers, which we now describe.

\begin{remark}\label{terminating}
 {\sl If $(\omega_n)_n\in {\mathcal WS}$ is of terminating type,  there is a $d\in \{1,2,\dots,N+1\}$ such that for every $n\geqslant d-1$, at the end of the play $\omega_n$, Disk~$d$ is in the hands of the player and Disks $d+1,d+2, \dots, n+1$ are all stacked up on the same peg (or, if $d=1$, are possibly all off the board). Unless $(\omega_n)_n$ is the sequence of empty words, there is exactly one word sequence, which records the exact same disk movements as $(\omega_n)_n$, except that Disk $d$ is not being picked up at the end of every word, and there are always exactly four distinct word sequences, which record the exact same disk movements as $(\omega_n)_n$, except that Disk $d$ is set down at the end of every word. (Since our disk configurations are restricted to within the shortest solution, we can set down Disk $d$ on two different pegs with one consistent choice of color facing up.) We call these word sequences} {\em equivalent}.
\end{remark}

Formally, we have the following  definition \cite[Definition~3.10]{FZ}.

\begin{definition}[Equivalence]
Given $(\omega_n)_n,(\xi_n)_n\in\mathcal W\mathcal S$, we write $(\omega_n)\doteq (\xi_n)_n$ if there is an $N\in \mathbb{N}$ such that $\omega_n=a_{n,1}a_{n,2}\cdots a_{n,m_n}$ for all $n\geqslant N$, i.e., $(\omega_n)_n$ is of terminating type, and either $\xi_n=a_{n,1}a_{n,2}\cdots a_{n,m_n-1}/a_{n,m_n}$ for all $n\geqslant N$ or  $\xi_n=a_{n,1}a_{n,2}\cdots a_{n,m_n}/a_{n,m_n+1}$ for all $n\geqslant N$ and some $a_{n,m_n+1}$. This induces an equivalence relation on $\mathcal W\mathcal S$ which we denote by the same symbol, ``$\doteq$''. Given a subset $S\subseteq \mathcal W\mathcal S$, we denote by $\dot{S}$ the set resulting from replacing every element of $S$ by an equivalent one from $\mathcal W\mathcal S$ of terminating type, whenever possible.
\end{definition}

\begin{remark} An equivalence class contains at most one word sequence of terminating type. If it contains an element of terminating type, then it contains six elements total, unless it contains the sequence of empty words, in which case it contains five. All other equivalence classes are singletons.
\end{remark}

\subsection{Completion} Before we can begin measuring distances between word sequences, we need to address their limiting behavior. Specifically, consider a word sequence $(\omega_n)_n\in \mathcal{WS}$ as depicted in  Figure~\ref{CompleteFigure} (left), where there is a vertex $(t,v)$ of $X_n$ such that every edge-path $\omega_k$ ($k>n+1$) comes to within one edge of a vertex of $X_k$ that maps to $(t,v)$, although $\omega_n$ does not proceed to visit vertex $(t,v)$ at that time. In such a situation, we insert the corresponding detour to vertex $(t,v)$ into the edge-path $\omega_n$ by way of Definition~\ref{compl} below. A game interpretation is provided in the following remark.

\begin{figure}[h]

\begin{tabular}{|c|c|}\hline

\includegraphics[scale=.75]{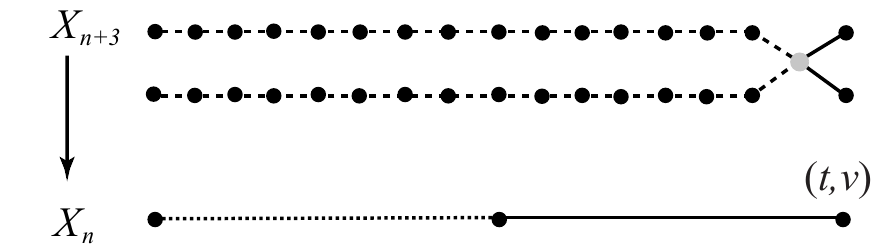} & \includegraphics[scale=.35]{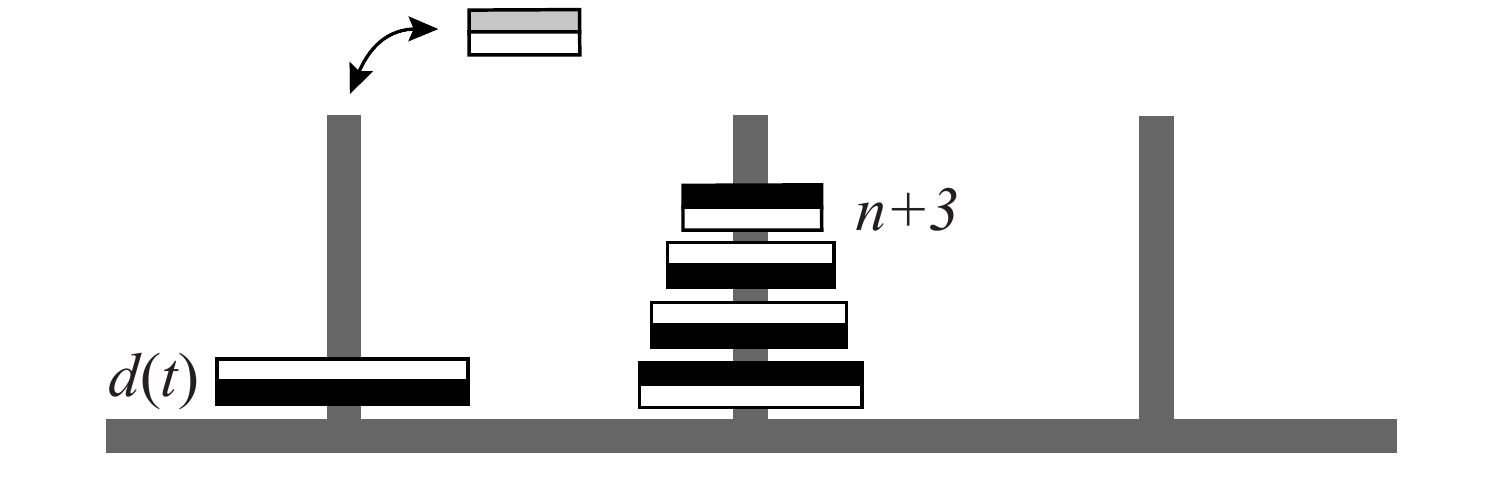} \\ \hline

\includegraphics[scale=.75]{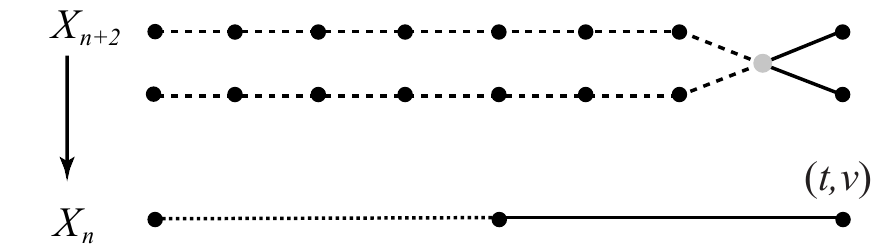} & \includegraphics[scale=.35]{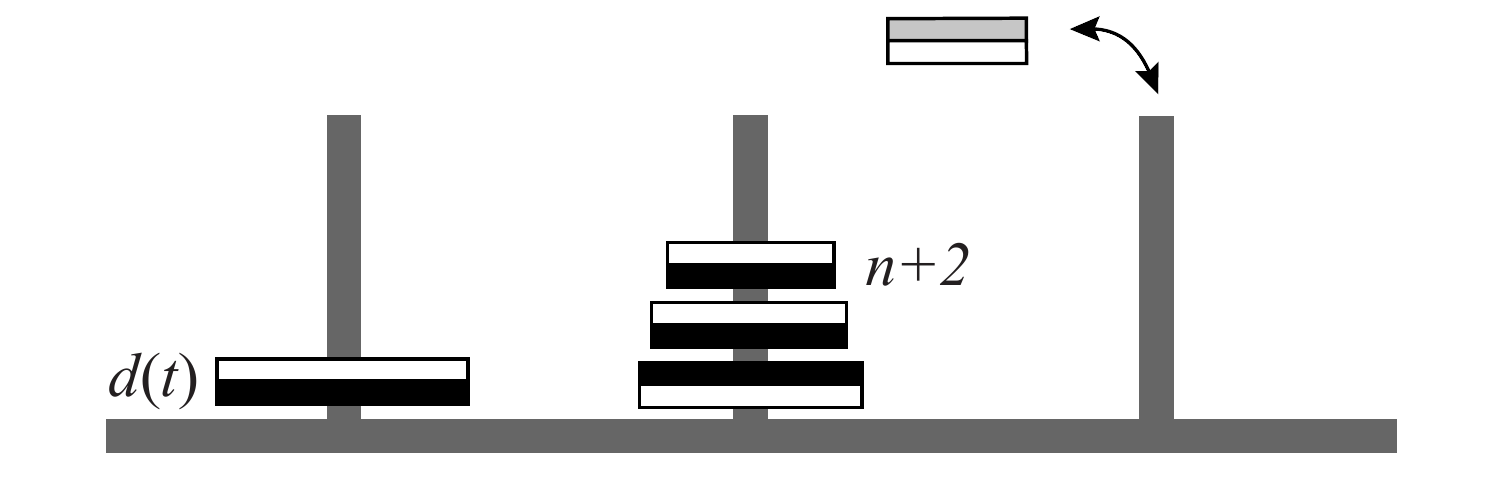} \\ \hline

\end{tabular}
\caption{\label{CompleteFigure} {\em Left:}  The dotted edge-paths $\omega_{n+2}, \omega_{n+3}, \dots$  come to within one edge of a vertex projecting to $(t,v)$, but the projection $\omega_n$ does not visit $(t,v)$. {\em Right:} {\sl The corresponding disk movements. As $\omega_{n+2}, \omega_{n+3}, \dots$ pass the gray vertex, the smallest disk (gray) gets picked up and immediately returned to the same peg (possibly turned), while Disk $d(t)$ remains on the board (exposed).}}
\end{figure}

\begin{remark}
{\sl Similar to Remark~\ref{terminating}, for a given $(\omega_n)_n\in {\mathcal WS}$ and fixed $1\leqslant d \leqslant n+1$, the player might be stacking up Disks $d+1, d+2, \dots, k$  during  corresponding moments of $\omega_k$ with $k>n+1$,
 seemingly preparing to pick up Disk $d$, but without actually doing so. (See Figure~\ref{CompleteFigure}, right, with $d(t)=d$.) Indeed, Disk $k$ might be added to this stack  during $\omega_k$ at a time when Disk $k+1$ is perceived as being handled by the player, only to be added to the stack itself during $\omega_{k+1}$ while Disk~$k+2$ is being handled, etc. From the word $\omega_n$, one cannot discern that, as more and more disks are being moved, the player is coming infinitesimally close to picking up Disk~$d$.
We remedy this situation by temporarily setting Disk $k+1$ on Disk~$k$ during $\omega_k$, while picking up Disk $d$ in the process---a move which is noted by the observer recording $\omega_n$---and checking if the trend persists. If it does, Definition~\ref{compl} below inserts the appropriate detour to vertex $(t,v)$ into the edge-path $\omega_n$.}
\end{remark}

The following definition is adapted from \cite[Definition~4.9]{FZ}:

\begin{definition}[Completion]  \label{compl} The completion $\overline{(\omega_n)_n}=(\tau_n)_n$ of a word sequence $(\omega_n)_n\in \mathcal W\mathcal S$ can be calculated as follows.
Fix $n\in \mathbb{N}$. For each $k>n+1$, consider the word $\omega_k$, which is either of the form $\omega_k=a_1a_2\cdots a_p$ or $\omega_k=a_1a_2\cdots a_p/a_{p+1}$. Form a new word $\eta_k$ from the word $a_1a_2\cdots a_p$ by inserting the word $a_ia_i^{-1}$ after every position $i$ for which $\chi(a_1a_2\cdots a_i)\equiv \pm 1$ (mod $2^{k-n}$), $\chi(a_1a_2\cdots a_{i-1})\not\equiv 0$ (mod $2^{k-n}$) and $\chi(a_1a_2\cdots a_{i+1})\not\equiv 0$ (mod $2^{k-n}$). We then define the word $\tau_n$ by $\tau_n=\phi_n\circ \phi_{n+1}\circ\cdots \circ \phi_{k-1}(\eta_k)$ for sufficiently large $k$.
\end{definition}

\begin{remark}\label{CompleteReduce} If $(\omega_n)_n$ is of terminating type, then  $\left(\overline{(\omega_n)_n}\right)'=(\omega_n)'_n$ \cite[Lemma~6.4]{FZ}. In particular, we have the following commutative diagram of bijections:

\hspace{-18pt} \parbox{8in}{
\[
\xymatrix{ \overline{\overleftarrow{\mathcal G}} \ar@{.>}[dr]_{'}^{ bijections} & &  \overleftarrow{\mathcal G}\; \ar@{.>}_{``\overline{\;\;\;}"}[ll] \\
& \; {\mathcal G} \ar@{.>}_{``\overleftarrow{\;\;\;}"}[ur]}
\]}
\end{remark}

\subsection{Stable initial match} Some care must be taken in pinpointing the exact moment where two word sequences diverge, because seeing more detail allows for noticing more differences. We recall from \cite[Definition~4.15]{FZ}:

\begin{definition}[Stable initial match]
For $\omega_n,\xi_n\in \mathcal W_+$, we define $\omega_n\cap \xi_n\in \mathcal W_+$ as the maximal consecutive initial substring of the letters of the two words, including any letters that might come after the symbol ``/'', where  we separate the last letter of $\omega_n\cap \xi_n$ by the symbol ``/'' if it is so separated in the shorter of the two words.
For $(\omega_n)_n,(\xi_n)_n\in \mathcal W\mathcal S$, the projections $\tau_n=\phi_n\circ \phi_{n+1} \circ \cdots \circ \phi_{k-1}(\omega_k\cap \xi_k)$ are constant for sufficiently large $k>n$, and we define $(\omega_n)_n\Cap (\xi_n)_n=(\tau_n)_n$.
\end{definition}

\subsection{Dynamic word length}

Since edge-paths in \cite{FZ} are recorded as words of visited {\em vertices}, rather than traversed {\em edges}, the recursive word length function from \cite[Definition~4.14]{FZ} (which is based on \cite{MO}) needs to be adjusted slightly (a game interpretation is given in Remark~\ref{speed} below):

  \begin{definition}\label{weights}
  Let $(\omega_n)_n\in {\mathcal WS}$.
  \begin{itemize}

  \item[(a)] Given $\omega_1=a_1a_2\cdots a_p/\ast$, i.e.,  $\omega_1=a_1a_2\cdots a_p$ or $\omega_1=a_1a_2\cdots a_p/a_{p+1}$, we insert geometrically decreasing  weights: $(\frac{1}{2})a_1(\frac{1}{4})a_2(\frac{1}{8}) \cdots (\frac{1}{2^p}) a_p ( \frac{1}{2^{p+1}})$.

  \item[(b)] Given $\omega_n=b_1b_2\cdots b_{k-1}/\ast$ with weights $(\rho_1) b_1 (\rho_2) b_2 (\rho_3) \cdots (\rho_{k-1}) b_{k-1}(\rho_k)$, and $\omega_{n+1}=a_1a_2\cdots a_p/\ast$, we define $\omega^i_{n+1}=c_{i,1} c_{i,2}\cdots c_{i,m_i}$ ($1\leqslant i \leqslant k$)  as in Definition~\ref{phi} for $\phi_n(a_1a_2\cdots a_p)=b_1b_2\cdots b_{k-1}$, and insert the weights
     \[\left(\frac{\rho_1}{2}\right) c_{1,1} \left(\frac{\rho_1}{4}\right) c_{1,2} \left(\frac{\rho_1}{8}\right) \cdots \left(\frac{\rho_1}{2^{m_1}}\right) c_{1,m_1} \left(\frac{\rho_1}{2^{m_1+1}}\right), \]

        \[c_{2,1}\left(\frac{\rho_{1}}{2^{m_{1}+1}}+\frac{\rho_2}{2}\right) c_{2,2}\left(\frac{\rho_2}{4}\right) c_{2,3} \left(\frac{\rho_2}{8}\right) \cdots \left(\frac{\rho_2}{2^{m_2-1}}\right) c_{2,m_2} \left(\frac{\rho_2}{2^{m_2}}\right), \mbox{ and }\]

     \[c_{i,1}\left(\frac{\rho_{i-1}}{2^{m_{i-1}}}+\frac{\rho_i}{2}\right) c_{i,2}\left(\frac{\rho_i}{4}\right) c_{i,3} \left(\frac{\rho_i}{8}\right) \cdots \left(\frac{\rho_i}{2^{m_i-1}}\right) c_{i,m_i} \left(\frac{\rho_i}{2^{m_i}}\right) \mbox{ for $i>2$}.\]
\end{itemize}\vspace{5pt}
We let $|\omega_n|$ denote the sum of the weights so inserted into the word $\omega_n$. Then  $|\omega_1|>|\omega_2|> |\omega_3|>\cdots$. We define $\displaystyle \|(\omega_n)_n\|=\lim_{n\rightarrow\infty} |\omega_n|$.
  \end{definition}

\begin{remark}
 Let $(\omega_n)_n\in \mathcal W\mathcal S$. Suppose the word $\omega_n=a_1a_2\cdots a_p/\ast$ has weights $(\rho_1)a_1 (\rho_2) a_2 (\rho_3) \cdots  (\rho_p)a_p (\rho_{p+1})$. Then, for every $2\leqslant i \leqslant p+1$, we have $\rho_1=1/2^n$, $0<\rho_{i-1}/2^n\leqslant \rho_i\leqslant (3/4)^{n-1}/2$ and  $|\omega_n|-\rho_p-\rho_{p+1}<\|(w_m)_m\|<|\omega_n|$ \cite[Lemmas~6.34 and 6.35]{FZ}. In particular, the empty word $\omega_n=\emptyset\in {\mathcal W}_+$ has length $|\emptyset|=1/2^n$, so that the sequence $(\emptyset)_n\in {\mathcal WS}$ has length $\|(\emptyset)_n\|=0$.
\end{remark}

\begin{remark} \label{speed} {\sl We  think of the weights in Definition~\ref{weights} as time limits (in fractions of some unit of time) that we give to the player for moving the disks. (We assume that our idealized player is capable of moving any disk at any speed.)
 For $\omega_1$, two disks need to be moved alternately, Disk~1 and Disk~2. Every time one of these two disks are handled by the player, we allow less time to do so: $1/2, 1/4, 1/8,\dots$. The total allowed time for completing the movements of $\omega_1$ is given by $|\omega_1|$. For example, during $\omega_1=x^{+1}x^{+1}y^{-1}$, we handle disks on four occasions, adding up to a total time limit of $|\omega_1|=1/2+1/4+1/8+1/16=1-1/16$.

Playing the game according to  $\omega_{n+1}$, there are $n+2$ disks. Recursively, time limits for the disk movements according to $\omega_n$ have already been established. As discussed in Remark~\ref{backandforth} and Section \ref{visualization}, every handling of a Disk~$d$ with $d\in \{1,2,\dots,n+1\}$ is intermittently interrupted by the handling of Disk $n+2$, as recorded by $\omega^i_{n+1}$ in Section~\ref{formula}. If $\rho$ denotes the allotted time for the corresponding handling of Disk $d$ as part of the word $\omega_n$, we allow the following lengths of time to handle either Disk $d$ or Disk $n+2$ during $\omega^i_{n+1}$: $\rho/2, \rho/4, \rho/8,\dots$. The remaining fractional time is being added to the allowed time for handling the very next disk, Disk $d'$ of Remark~\ref{backandforth}. The total allowed time to finish $\omega_{n+1}$ equals $|\omega_{n+1}|$.

For example, consider the word $\omega_7= x^{+1} y^{-1} y^{+1}\mid x^{+1} y^{-1} x^{+1} y^{+1} x^{-1} y^{+1}\mid  y^{+1} x^{-1} y^{+1} y^{-1}\mid y^{-1} x^{-1} x^{+1} \cdots$, subdivided as $\omega_7^1, \omega_7^2, \omega_7^3, \dots$. Then $\omega_7$  maps to $\omega_6=\phi_6(\omega_7)=x^{+1}y^{+1} x^{-1}  \cdots$. If the allotted time limits for the disk movements for $\omega_6$ are $\rho_1, \rho_2, \rho_3, \rho_4, \dots$, then the time limits for $\omega_7$ are $\rho_1/2, \rho_1/4,\rho_1/8, \rho_1/16,$ $(\rho_1/16 +\rho_2/2), \rho_2/4, \rho_2/8, \rho_2/16,$ $\rho_2/32, \rho_2/64,(\rho_2/64+\rho_3/2), \rho_3/4, \rho_3/8,\rho_3/16,$\linebreak $(\rho_3/16+\rho_4/2), \rho_4/4, \rho_4/8, \cdots$, which need to be added  to obtain $|\omega_7|$.

 At the very end of every word $\omega_n$, we loose a little bit of time, since there is no next disk which could make use of the fractional carryover. The effect of this is that the more disks are on the board, the faster the player has to finish the game.}
\end{remark}

\subsection{The $\mathbb{R}$-tree.} In summary, the two-letter word calculus described herein, as illustrated by the corresponding movements of disks in our version of the Towers of Hanoi, gives an explicit description of the generalized Cayley graph for $\pi_1(\mathbb{M})$:

\begin{theorem}[Theorems~B--E of \cite{FZ}] \label{B-E}\hspace{1in}
\begin{itemize}
\item[(a)]  For word sequences $(\omega_n)_n, (\xi_n)_n\in \overleftarrow{\mathcal R}$, define \[\rho((\omega_n)_n,(\xi_n)_n)=\Big\|\overline{(\omega_n)_n}\Big\|+\Big\|\overline{(\xi_n)_n}\Big\|-2\Big\|\overline{(\omega_n)_n}\Cap \overline{(\xi_n)_n}\Big\|.\] Then $\rho$ is a pseudo metric on $\overleftarrow{\mathcal R}$ with $\rho((\omega_n)_n,(\xi_n)_n)=0 \Leftrightarrow (\omega_n)_n\doteq (\xi_n)_n$.  Moreover, the resulting metric space $(\dot{\overleftarrow{\mathcal R}},\rho)$ is an $\mathbb{R}$-tree.\vspace{3pt}

\item[(b)] The group $\overleftarrow{\mathcal G}\cong \pi_1(\mathbb{M})$ acts freely and by homeomorphisms on the $\mathbb{R}$-tree $\dot{\overleftarrow{\mathcal R}}$ via its natural action $(\omega_n)_n.(\xi_n)_n=\overleftarrow{(\omega_n \xi_n)'_n}$ with quotient $\dot{\overleftarrow{\mathcal R}}/\overleftarrow{\mathcal G}\approx\mathbb{M}$.\vspace{3pt}

    \noindent{\em (}Note that the action cannot possibly be by isometries; see Example~\ref{HE2} below.{\em )}

\item[(c)] For word sequences $(\omega_n)_n,(\xi_n)_n\in \overleftarrow{\mathcal G}\cong \pi_1(X,x)$, the arc of the $\mathbb{R}$-tree $\dot{\overleftarrow{\mathcal R}}$ from $(\omega_n)_n$ to $(\xi_n)_n$ naturally spells out
    the word sequence  $\overline{(\omega_n)_n^{-1}\ast (\xi_n)_n}$,
      when projecting the arc to its edge-paths: $\dot{\overleftarrow{\mathcal R}}\rightarrow \mathbb{M}\rightarrow X_n$.
\end{itemize}
\end{theorem}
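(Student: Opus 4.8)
The plan is to deduce all three parts by specializing the general theory of \cite{FZ}, whose Theorems~B--E establish precisely these assertions for the generalized Cayley graph of the fundamental group of an arbitrary one-dimensional, path-connected, compact metric space presented as an inverse limit of graphs with linear bonding maps. Accordingly, the first task is to confirm that the Pasynkov model satisfies the standing hypotheses of \cite{FZ}. By Theorem~\ref{Pasynkov}, the space $\mathbb{M}=\lim_{\longleftarrow}(X_n,f_n)$ is the Menger curve, hence one-dimensional, path-connected, compact and metrizable; and by (\ref{sp}.\ref{linearity}) each $f_n$ carries every edge of $X_{n+1}$ linearly onto an edge of $X_n^\ast$. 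These are exactly the conditions invoked in \cite[Lemma~3.1]{FZ}, so the machinery applies, as was already observed in Section~\ref{cech}.

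The substantive preliminary work is then to verify that each combinatorial object introduced in Sections~\ref{map}--\ref{CayM} is the faithful translation, into the edge-based alphabet $\{x^{\pm1},y^{\pm1}\}$, of the corresponding vertex-based object of \cite{FZ}. Concretely, I would check: that the extended functions $\phi_n:{\mathcal W}_+\rightarrow{\mathcal W}_+$ realize the edge-path projection induced by $f_n$, which holds by the construction of Section~\ref{map} together with (\ref{sp}.\ref{bonding}) and (\ref{sp}.\ref{preimages}); that the completion $\overline{(\;)}$ of Definition~\ref{compl}, the stable initial match $\Cap$, and the equivalence $\doteq$ agree with \cite[Definitions~4.9, 4.15, 3.10]{FZ} under this translation; and that $\overleftarrow{\mathcal R}$, $\dot{\overleftarrow{\mathcal R}}$ and $\overleftarrow{\mathcal G}$ are the corresponding sets. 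Once the dictionary is in place, Theorems~B, C, D and E of \cite{FZ} yield parts (a), (b) and (c) verbatim, while Theorem~\ref{structure} (Theorem~A of \cite{FZ}) supplies the group $\overleftarrow{\mathcal G}\cong\pi_1(\mathbb{M})$ underlying (b) and (c) and, in particular, the well-definedness of the action $(\omega_n)_n.(\xi_n)_n=\overleftarrow{(\omega_n\xi_n)'_n}$.

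The single place where a genuine adjustment is required, and the step I expect to be the crux, is the dynamic word length of Definition~\ref{weights}: because \cite[Definition~4.14]{FZ} weights visited \emph{vertices} whereas we weight traversed \emph{edges}, the recursion must be re-derived rather than merely relabeled. I would verify that, with the weighting of Definition~\ref{weights}, the sequence $|\omega_1|>|\omega_2|>\cdots$ is still strictly decreasing and that the limit $\|(\omega_n)_n\|$ satisfies the estimates recorded in the Remark following Definition~\ref{weights}, namely $\rho_1=1/2^n$, $0<\rho_{i-1}/2^n\leqslant\rho_i\leqslant(3/4)^{n-1}/2$ for $2\leqslant i\leqslant p+1$, and $|\omega_n|-\rho_p-\rho_{p+1}<\|(\omega_m)_m\|<|\omega_n|$. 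These are exactly the inputs that the $\mathbb{R}$-tree and action arguments of \cite{FZ} consume. Granting them, the pseudometric property and the vanishing criterion $\rho=0\Leftrightarrow\doteq$ of part (a), the $0$-hyperbolicity (four-point) verification that upgrades $(\dot{\overleftarrow{\mathcal R}},\rho)$ to an $\mathbb{R}$-tree, the freeness and continuity of the action together with the identification of the quotient as $\mathbb{M}$ in part (b), and the arc-labeling by $\overline{(\omega_n)_n^{-1}\ast(\xi_n)_n}$ in part (c) all transfer from the cited results without further modification.
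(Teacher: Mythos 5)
Your proposal is correct and takes essentially the same route as the paper: the paper offers no independent proof of this theorem but derives it exactly as you do, by verifying the hypotheses of \cite{FZ} via Theorem~\ref{Pasynkov} and (\ref{sp}.\ref{linearity}) (cf.\ \cite[Lemma~3.1]{FZ}), adapting the definitions of completion, stable initial match, equivalence and word length to the edge-based alphabet, and then invoking Theorems~B--E of \cite{FZ} verbatim. Your identification of the vertex-to-edge adjustment in the dynamic word length as the one point requiring genuine re-derivation matches the paper's own remark preceding Definition~\ref{weights}.
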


\section{An embedding of the Hawaiian Earring}\label{HEemb}

\begin{example}  \label{HE1} Here is a sequence which is {\em not} locally eventually constant.
Consider the word \[\ell_{(k)}=x^{2^{k-1}}y^{-1}x^{2-2^k}y^{-1}x^{2^{k-1}}\in \Omega_n \;\;(1\leqslant k\leqslant n).\] (For fixed $k$, the same word $\ell_{(k)}$ corresponds to an edge-path in $X_n$ for each $n\geqslant k$.)
Then $\phi_{n-1}(\ell_{(k)})=\ell_{(k-1)}$ for $2\leqslant k \leqslant n$ and  $\phi_{n-1}(\ell_{(1)})=$  empty word. (See Remark~\ref{movements}.)
Following \cite[p.~185]{G}, consider \[[n,k]=\ell_{(n)}\ell_{(k)}\ell_{(n)}^{-1}\ell_{(k)}^{-1},\] put $\omega_1=\emptyset\in \Omega_1$ and
\[\omega_n=[n,n-1][n,n-2]\cdots [n,1]\in \Omega_n \mbox{ for } n>1.\]
(Here, we define $(a_1a_2\cdots a_m)^{-1}=a_m^{-1}a_{m-1}^{-1}\cdots a_1^{-1}$.) Then  $(g_n)_n:=(\omega_n')_n\in G$, because $\phi_{n-1}'(\omega_n')=\phi_{n-1}(\omega_n)'=(\omega_{n-1}\ell_{(n-1)}\ell_{(n-1)}^{-1})'=\omega_{n-1}'$.

In order to see why $(g_n)_n$ is not locally eventually constant, observe that (by Definition~\ref{phi}) if $\phi_{n-1}$ is applied to any word of $\Omega_n$ containing a subword of length $2^n-1$ with equal exponents, it produces a word of $\Omega_{n-1}$ containing a corresponding subword of length at least $2^{n-1}-1$ with equal exponents. Consequently, since the central $2^n$ letters of each of the $2n-2$ subwords $\ell^{\pm 1}_{(n)}$ of $\omega_n$ remain after we carry out all possible cancellations in $\omega_n$, the word $\phi_1\circ \phi_2\circ \cdots \circ \phi_{n-1}(g_n)$ contains at least $2n-2$ letters.
(In fact, a more detailed analysis would show that $\phi_1\circ \phi_2\circ \cdots \circ \phi_{n-1}(g_n)=\left(\ell_{(1)}\ell_{(1)}^{-1}\right)^{n-1}$.)
\end{example}

\begin{remark}\label{movements}
{\sl
The word $\ell_{(k)}\in \Omega_n$ ($1\leqslant k\leqslant n$) in Example~\ref{HE1} describes the following movements of the $n+1$ disks: Place the entire stack on Peg 0, play the top stack consisting of the smallest $k-1$ disks to Peg $P$, where $P\in\{1,2\}$ and $P\equiv n-k$ (mod 2), turn over the $k^{\mbox{th}}$ smallest disk, play the top stack back to Peg 0, move the entire stack to Peg 2, play the top stack consisting of the smallest $k-1$ disks to Peg $P$, where $P\in\{0,1\}$ and $P\equiv n-k$ (mod 2), turn over the $k^{\mbox{th}}$ smallest disk, play the top stack back to Peg 2, and finally pick up the entire stack. (Note that the first (resp. second) time around, the top stack reassembles at the unique Peg $P$ that would allow the $k^{\mbox{th}}$ smallest disk to move in (resp. against) its natural running direction.)}
\end{remark}

\begin{example}[cf. Example~4.15 of \cite {FZ2}]\label{HE2} There is no $\mathbb{R}$-tree metric on  $\dot{\overleftarrow{\mathcal R}}$ which renders the action of $\overleftarrow{\mathcal G}$ as isometries. Suppose, to the contrary, that there is a metric $\kappa: \dot{\overleftarrow{\mathcal R}}\times \dot{\overleftarrow{\mathcal R}}\rightarrow [0,\infty)$ such that $(\dot{\overleftarrow{\mathcal R}},\kappa)$ is an $\mathbb{R}$-tree and such that the action of $\overleftarrow{\mathcal G}$ on $(\dot{\overleftarrow{\mathcal R}},\kappa)$ is by isometries. Using the words $\ell_{(k)}$
from Example~\ref{HE1}, define \vspace{5pt}

\hspace{1in}  $L_1=(\ell_{(1)}, \ell_{(2)}, \ell_{(3)}, \ell_{(4)}, \ell_{(5)}, \cdots)\in \overleftarrow{\mathcal G}$

\hspace{1in} $L_2=( \hspace{5pt} \emptyset \hspace{5pt}, \ell_{(1)}, \ell_{(2)}, \ell_{(3)},\ell_{(4)}, \cdots)\in \overleftarrow{\mathcal G}$

\hspace{1in} $L_3=(\hspace{5pt} \emptyset \hspace{5pt}, \hspace{5pt} \emptyset \hspace{5pt}, \ell_{(1)}, \ell_{(2)}, \ell_{(3)}, \cdots)\in \overleftarrow{\mathcal G}$

\hspace{1in} $L_4=(\hspace{5pt} \emptyset \hspace{5pt}, \hspace{5pt} \emptyset \hspace{5pt}, \hspace{5pt} \emptyset \hspace{5pt}, \ell_{(1)}, \ell_{(2)},\cdots)\in \overleftarrow{\mathcal G}$

\hspace{1.225in} $\vdots$
\vspace{5pt}

\noindent  Choose $n_i\in \mathbb{N}$ with $n_i>1/\kappa((\emptyset)_n,L_i)$. Then $L=L_1^{n_1}L_2^{n_2}L_3^{n_3}\cdots \in \overleftarrow{\mathcal G}\subseteq \dot{\overleftarrow{\mathcal R}}$. By Theorem~\ref{B-E}(c) and Remark~\ref{CompleteReduce}, the arc in $\dot{\overleftarrow{\mathcal R}}$ from $(\emptyset)_n$ to $L$ contains the points $L_1^{n_1}L_2^{n_2}L_3^{n_3}\cdots L_i^j\in \overleftarrow{\mathcal G}$ for all $i\in \mathbb{N}$ and $1\leqslant j \leqslant n_i$.
Therefore,  assuming the action of $\overleftarrow{\mathcal G}$ on $(\dot{\overleftarrow{\mathcal R}},\kappa)$ is by isometries, the length of this arc equals $\displaystyle \sum_{i=1}^\infty n_i\kappa((\emptyset)_n,L_i)=\infty$. However, the length of an arc in an $\mathbb{R}$-tree  equals the distance between its endpoints.
\end{example}

\begin{remark}
We mention in passing that $\rho((\emptyset)_n,L_i)=(11/12)(1/2)^{i-1}$. To verify this, it suffices to show that $||L_1||=11/12$. This, in turn, follows from three observations: (i) By Definition~\ref{weights}(a), $|\ell_{(1)}|=1-1/32$; (ii) If the subwords $\omega^1_{n+1},\omega^2_{n+1}, \dots, \omega^{k}_{n+1}$  are as in Definitions~\ref{phi} \& \ref{weights}(b) for  $\phi_{n}(\ell_{(n+1)})=\ell_{(n)}$, then $k=2^{n+1}+1$, the word $\omega^{k-1}_{n+1}=c_{k-1,1}c_{k-1,2}$ has two letters and  the word $\omega^k_{n+1}=c_{k,1}$ has one letter; (iii) Consequently, by Definition~\ref{weights}(b), the weight which follows $c_{k,1}$, call it $\varrho_{n+1}=\lambda_{n+1}/2^{2n+5}$, satisfies the recursion $\lambda_1=1$ and $\lambda_{n+1}=2\lambda_n+2$, so that $\lambda_{n+1}=3\cdot 2^n-2$. Hence, $||L_1||= 1-1/32 - (1/2)\varrho_1 - (1/2)\varrho_2 - (1/2)\varrho_3 - \cdots = 11/12$.
\end{remark}

\begin{example}[A Hawaiian Earring]\label{HE3}
Let $L_i$ be as in Example~\ref{HE2}.
Let $C_i\subseteq \dot{\overleftarrow{\mathcal R}}/\overleftarrow{\mathcal G}\cong\mathbb{M}$ be the image of the arc in $\dot{\overleftarrow{\mathcal R}}$ from  $(\emptyset)_n$ to $L_i$.  Then each $C_i$ is a simple closed curve and $C_{i+1}$ shares an arc with $C_i$ that contains the base point of $\mathbb{M}$. The subspace $\bigcup_{i\in\mathbb{N}} C_i$ of $\mathbb{M}$ is clearly homotopy equivalent to the Hawaiian Earring $\mathbb{H}=\{(x,y)\in \mathbb{R}^2\mid x^2+(y-1/i)^2=(1/i)^2 \mbox{ for some } i\in \mathbb{N}\}$.
Let $Y_n$ be the image of $\bigcup_{i=1}^n C_i$ in $X_n$. Put $\alpha_{n,i}=\ell_{(n+1-i)}$. Then $\phi_{n-1}(\alpha_{n,i})=\alpha_{n-1,i}$ for $1\leqslant i\leqslant n-1$ and $\phi_{n-1}(\alpha_{n,n})=$ empty word. Let ${\mathcal L}_n$ be the set of all finite words over the alphabet  $\{\alpha_{n,1}^{\pm 1}, \alpha_{n,2}^{\pm 1}, \dots, \alpha_{n,n}^{\pm 1}\}$ and let $\varepsilon:{\mathcal L}_n\rightarrow \Omega_n$ be the natural letter substitution.
Considering
${\mathcal H}=\{(g_n)_n\in {\mathcal G}\mid \mbox{for every $n$, there is an } \omega_n\in \varepsilon({\mathcal L}_n) \mbox{ with } g_n=\omega_n'\}$ and
applying \cite[Theorem~A]{FZ} to \[\bigcup_{i\in \mathbb{N}} C_i=\lim_{\longleftarrow} \left(Y_1\stackrel{f_1|_{Y_2}}{\longleftarrow} Y_2 \stackrel{f_2|_{Y_3}}{\longleftarrow} Y_3 \stackrel{f_3|_{Y_4}}{\longleftarrow} \cdots\right),\] we see that the subgroup $\overleftarrow{\mathcal H}$ of $\overleftarrow{\mathcal G}$ is isomorphic to $\pi_1(\mathbb{H})$.

This is seen to be equivalent to the known word sequence description of $\pi_1(\mathbb{H})$ as detailed, for example,  in \cite{Z}, upon formally identifying  all $\alpha_{n,i}$ ($n\in \mathbb{N})$ to $\alpha_i$,\linebreak interpreting  $\phi_{n-1}$ as the function ${\mathcal L}_{n}\rightarrow {\mathcal L}_{n-1} $ which deletes all occurrences of $\alpha_n$,\linebreak and considering the stabilizations of the locally eventually constant sequences in the limit of the reduced inverse sequence of free groups \[F(\alpha_1)\stackrel{\phi'_1}{\leftarrow} F(\alpha_1,\alpha_2) \stackrel{\phi'_2}{\leftarrow} F(\alpha_1,\alpha_2,\alpha_3) \stackrel{\phi'_3}{\leftarrow}\cdots,\] under the group operation of termwise concatenation, followed by reduction and restabilization.
\end{example}

\begin{remark}
With a small modification of the definition for $L_i$ in Example~\ref{HE2} (leaving the distance $\rho((\emptyset)_n,L_i)$ unchanged) the corresponding simple closed curves $C_i$ of Example~\ref{HE3}, pairwise, only share the base point of
$\mathbb{M}$, so that $\bigcup_{i\in\mathbb{N}} C_i$ is {\em homeomorphic} to $\mathbb{H}$. {\sl Specifically, where previously each $L_i$ was defined as a game sequence in which only the $(i+1)^{\mbox{st}}$ disk is turned (twice), one would instead define $L_i$ as a game sequence in which every $k\cdot(i+1)^{\mbox{st}}$ disk (for $k=1,2,3,\dots$) is turned exactly twice, once from white to black as soon as possible, and once from black to white at the last opportunity.} We leave it to the reader to formulate this in terms of a word sequence.
\end{remark}

\noindent {\bf Acknowledgements}.  This work was partially supported by a grant from the Simons Foundation (\#245042 to Hanspeter Fischer).

\end{document}